\documentclass[a4j,10pt]{article}

\usepackage{amsmath, amssymb, amsthm,amscd}
\usepackage{graphics, comment} 
\usepackage[dvips]{graphicx}

\allowdisplaybreaks

\numberwithin{equation}{section}
\theoremstyle{definition}
\newtheorem{example}{example}[section]
\newtheorem{df}[example]{Definition}
\theoremstyle{remark}
\newtheorem{remark}[example]{{\bf Remark}}
\theoremstyle{plain}
\newtheorem{lm}[example]{Lemma}
\newtheorem{prop}[example]{Proposition}

\newtheorem{ex}[example]{Example}

{\par\ifdim\lastskip<\bigskipamount\removelastskip\fi\smallskip%
\noindent\begin{trivlist}%
\item[\hskip\labelsep{\bf Proof\ }]}%
{\rule[-.1mm]{1.5mm}{2ex}\end{trivlist}\medskip\par}

\newtheorem{theorem}[example]{Theorem}

\newcommand{\lsp}{[\kern-0.15em[} 
\newcommand{\rsp}{]\kern-0.15em]} 

\def\Rc{\varrho} 

\theoremstyle{remark}

\begin{document}
\title{Nilsolitons of H-type in the Lorentzian setting} 
\author{Kensuke Onda and Phillip E. Parker\thanks{%
Onda was supported by funds of Nagoya University and OCAMI. \newline
2000 \emph{Mathematics Subject Classification:} 53C50, 53C21, 53C25.
\newline
\emph{Keywords and phrases:} Lorentzian Lie groups, left-invariant metrics,
algebraic Ricci solitons.} }
\maketitle


\begin{abstract}
\noindent
It is known that all left-invariant pseudo-Riemannian metrics on $H_3$ 
are algebraic Ricci solitons.
We consider generalizations of Riemannian $H$-type, namely pseudo$H$-type 
and $pH$-type.
We study algebraic Ricci solitons of left-invariant Lorentzian metrics on 
2-step nilpotent Lie groups of both types.
\end{abstract}

\section{\protect\LARGE Introduction and preliminaries }
The concept of an algebraic Ricci soliton was first introduced by Lauret
in the Riemannian case \cite{L01}. The definition extends to the
pseudo-Riemannian case:

\begin{df}
\textit{Let } $\left( \mathit{G,g}\right) $\textit{\ be a simply connected
Lie group equipped with a left-invariant pseudo-Riemannian metric $g$, and
let } ${\mathfrak{g}}$ denote \textit{the Lie algebra }of $\mathit{G.}$
\textit{Then $g$ is called an algebraic Ricci soliton if it satisfies }
\begin{equation}
\mathrm{Ric}=c\cdot \mathrm{Id}+D  \label{Soliton}
\end{equation}%
\textit{where }$\mathrm{Ric}$\textit{\ denotes the Ricci operator, $c$ is a
real number, and }$\mathrm{D\in Der}\left( \mathfrak{g}\right) $ ($\mathrm{D}
$ is \textit{a derivation of }$\mathit{\mathfrak{g}}$\textit{)}, that is:%
\begin{equation*}
\mathit{\mathrm{D}[X,Y]=[\mathrm{D}X,Y]+[X,\mathrm{D}Y]\ \,}\text{\ \textit{%
for any }}\mathit{\,\ X,Y\in \mathfrak{g}.}
\end{equation*}%
\textit{\ In particular, an algebraic Ricci soliton on a solvable Lie group
(a nilpotent Lie group) is called a solvsoliton (a nilsoliton).}
\end{df}

Obviously, Einstein metrics on a Lie group are algebraic Ricci solitons.

\begin{remark}
Let $G$ be a semisimple Lie group and $g$ a left-invariant Riemannian
metric. If $g$ is an algebraic Ricci soliton, then $g$ is Einstein
(\cite{L01}).
\end{remark}

Next we introduce Ricci solitons. Let $g_{0}$ be a pseudo-Riemannian metric
on a manifold $M^{n}$. If $g_{0}$ satisfies
\begin{equation}
\varrho \lbrack g_{0}]=c\cdot g_{0}+L_{X}g_{0}\ ,  \label{Ricci soliton}
\end{equation}%
where $\varrho $ denotes the Ricci tensor, $X$ is a vector field and $c$ is
a real constant, then $(M^{n},g_{0},X,c)$ is called a \textit{Ricci soliton
structure} and $g_{0}$ is \textit{the Ricci soliton.} Moreover, we say
that
the Ricci soliton $g_{0}$ is a \textit{gradient Ricci soliton} if the vector
field $X$ satisfies $X=\nabla f$ where $f$ is a function. The Ricci soliton
$g_{0}$ is said to be a \textit{non-gradient Ricci soliton} if the vector
field $X$ satisfies $X\neq \nabla f$ for any function $f$. If $c$ is
positive, zero, or negative, then $g_{0}$ is called a shrinking, steady, or
expanding Ricci soliton, respectively. According to \cite{CK04}, we check
that a Ricci soliton is a Ricci flow solution.

\begin{prop}[\protect\cite{CK04}]
\textit{A Riemannian metric $g_{0}$ is a Ricci soliton if and only
if $g_{0}$ is the initial metric of the Ricci flow equation}
\begin{equation*}
\frac{\partial }{\partial t}g(t)_{ij}=-2 \varrho \lbrack g(t)]_{ij}\ ,
\end{equation*}%
\textit{and the solution is expressed as $g(t)=c(t)(\varphi _{t})^{\ast
}g_{0}$, where $c(t)$ is a scaling parameter, and $\varphi _{t}$ is a
diffeomorphism.}
\end{prop}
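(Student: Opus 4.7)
The plan is to exploit two standard invariance properties of the Ricci tensor: naturality under diffeomorphisms, $\varrho[\varphi^{*}g]=\varphi^{*}\varrho[g]$, and scale invariance as a $(0,2)$-tensor, $\varrho[\alpha g]=\varrho[g]$ for every positive constant $\alpha$. Combined with the standard identity $\partial_{t}(\varphi_{t}^{*}g_{0})=\varphi_{t}^{*}(L_{W_{t}}g_{0})$, valid whenever $W_{t}$ is the time-dependent infinitesimal generator of the family $\varphi_{t}$, these let me compute the Ricci flow evolution of an ansatz $g(t)=c(t)\,\varphi_{t}^{*}g_{0}$ and match it term by term against the soliton equation.

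For the ``only if'' direction, assume $\varrho[g_{0}]=\lambda g_{0}+L_{X}g_{0}$ for a constant $\lambda$ and a vector field $X$ (I rename the soliton constant $\lambda$ to avoid collision with $c(t)$). I would set $c(t):=1-2\lambda t$ and take $\varphi_{t}$ to be the flow, starting from $\varphi_{0}=\mathrm{id}$, of the time-dependent vector field $W_{t}:=-\tfrac{2}{c(t)}X$, defined on the maximal interval where $c(t)>0$. A direct computation then gives
\begin{equation*}
\partial_{t}g(t)=c'(t)\,\varphi_{t}^{*}g_{0}+c(t)\,\varphi_{t}^{*}(L_{W_{t}}g_{0})=\varphi_{t}^{*}\bigl(-2\lambda\,g_{0}-2\,L_{X}g_{0}\bigr)=-2\,\varphi_{t}^{*}\varrho[g_{0}]=-2\,\varrho[g(t)],
\end{equation*}
where the last two equalities use diffeomorphism naturality and scale invariance, respectively. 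Since $g(0)=g_{0}$, this exhibits $g(t)$ as a Ricci flow solution of the prescribed form.

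For the ``if'' direction, suppose conversely that $g(t)=c(t)\,\varphi_{t}^{*}g_{0}$ solves the Ricci flow with $c(0)=1$ and $\varphi_{0}=\mathrm{id}$. Differentiating at $t=0$ and again using $\partial_{t}(\varphi_{t}^{*}g_{0})|_{t=0}=L_{W_{0}}g_{0}$ yields
\begin{equation*}
-2\,\varrho[g_{0}]=c'(0)\,g_{0}+L_{W_{0}}g_{0},
\end{equation*}
which rearranges to $\varrho[g_{0}]=\lambda\,g_{0}+L_{X}g_{0}$ with $\lambda=-c'(0)/2$ and $X=-W_{0}/2$. This is the Ricci soliton equation.

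The main obstacle is organizational rather than analytic: one has to fix sign conventions for the Lie derivative and for the generator of a time-dependent flow, verify that the ODE defining $\varphi_{t}$ integrates on the interval where $c(t)\neq 0$, and confirm that $g(t)$ remains a genuine Riemannian metric there. Once those conventions are pinned down, both implications reduce to the two invariance identities applied to the ansatz, and no further ingredient is needed.
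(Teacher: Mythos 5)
The paper does not reproduce a proof of this proposition but simply cites it from Chow--Knopf \cite{CK04}; your argument is precisely the standard one given there, using naturality and scale invariance of the Ricci tensor together with $\partial_{t}(\varphi_{t}^{*}g_{0})=\varphi_{t}^{*}(L_{W_{t}}g_{0})$, and the constants $c(t)=1-2\lambda t$, $W_{t}=-\tfrac{2}{c(t)}X$ check out against the sign convention $\varrho[g_{0}]=c\cdot g_{0}+L_{X}g_{0}$ used in the paper. The only caveats are the ones you already flag (integrability of the time-dependent flow and positivity of $c(t)$), so the proposal is correct and matches the intended proof.
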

This propositon extends directly to the pseudo-Riemannian setting. 

\begin{theorem}[\protect\cite{L01,O11}]\label{RS}
Let $\left( G,g\right) $ be a simply connected Lie group endowed
with a left-invariant pseudo-Riemannian metric $g$. If $g$ is an algebraic Ricci soliton,
then $g$ is a Ricci soliton; that is, $g$ satisfies
\eqref{Ricci soliton}  so that%
\begin{equation*}
X=\dfrac{d\varphi _{t}}{dt}\Big|_{t=0}(p)\text{ \ and }\mathrm{\exp }\left(
\frac{t}{2}\mathrm{D}\right) =d\varphi _{t}|_{e},
\end{equation*}%
where $e$ denotes the identity element of $G$ .
\end{theorem}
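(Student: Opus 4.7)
The plan is to build the one-parameter family $\{\varphi_{t}\}$ explicitly from the derivation $\mathrm{D}$, reduce \eqref{Ricci soliton} to an identity at the identity $e\in G$ using left-invariance, and verify that identity via a short computation exploiting self-adjointness of the Ricci operator.

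First, since $\mathrm{D}\in\mathrm{Der}(\mathfrak{g})$, the one-parameter family $A_{t}:=\exp(\tfrac{t}{2}\mathrm{D})$ lies in $\mathrm{Aut}(\mathfrak{g})$. As $G$ is simply connected, each $A_{t}$ lifts uniquely to a Lie group automorphism $\varphi_{t}:G\to G$ satisfying $d\varphi_{t}|_{e}=A_{t}$, and the resulting family is a smooth one-parameter group of diffeomorphisms whose infinitesimal generator is exactly the vector field $X$ in the statement. From $\varphi_{t}(hg)=\varphi_{t}(h)\varphi_{t}(g)$ one gets $\varphi_{t}\circ L_{h}=L_{\varphi_{t}(h)}\circ\varphi_{t}$ for every $h\in G$, so $\varphi_{t}^{*}g_{0}$ is again a left-invariant metric, and hence $L_{X}g_{0}=\tfrac{d}{dt}\big|_{t=0}\varphi_{t}^{*}g_{0}$ is a left-invariant $(0,2)$-tensor field. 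Since $g_{0}$ and $\varrho[g_{0}]$ are likewise left-invariant, it suffices to verify \eqref{Ricci soliton} at $e$.

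At $e$, using $\varphi_{t}(e)=e$ and $d\varphi_{t}|_{e}=A_{t}$, one obtains
\begin{equation*}
(L_{X}g_{0})_{e}(U,V)=\frac{d}{dt}\Big|_{t=0}g_{0}(A_{t}U,A_{t}V)=\tfrac{1}{2}\bigl[g_{0}(\mathrm{D}U,V)+g_{0}(U,\mathrm{D}V)\bigr].
\end{equation*}
Symmetry of the Ricci tensor forces $\mathrm{Ric}$ to be $g_{0}$-self-adjoint, so $\mathrm{D}=\mathrm{Ric}-c\cdot\mathrm{Id}$ is $g_{0}$-self-adjoint as well, and the previous display reduces to $g_{0}(\mathrm{D}U,V)$. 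On the other hand $\varrho[g_{0}]_{e}(U,V)=g_{0}(\mathrm{Ric}\,U,V)=c\,g_{0}(U,V)+g_{0}(\mathrm{D}U,V)$, so $\varrho[g_{0}]=c\cdot g_{0}+L_{X}g_{0}$ at $e$, and hence everywhere on $G$ by left-invariance.

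The only delicate step is the integration of $A_{t}$ to a one-parameter group of Lie group automorphisms, where the simple-connectedness of $G$ is essential; after that the identity becomes a short factor-counting calculation. The exponent $\tfrac{t}{2}$ in $\exp(\tfrac{t}{2}\mathrm{D})$ is dictated exactly by the need for the symmetrized combination $\tfrac{1}{2}[g_{0}(\mathrm{D}U,V)+g_{0}(U,\mathrm{D}V)]$ to collapse to $g_{0}(\mathrm{D}U,V)$ via self-adjointness of $\mathrm{D}$. The argument never uses the definiteness of $g_{0}$, so it carries over unchanged to the pseudo-Riemannian (in particular Lorentzian) setting considered throughout the paper.
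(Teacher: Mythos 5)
The paper does not prove Theorem \ref{RS} itself---it cites \cite{L01,O11}---and your argument is the standard one from those references: lift $\exp(\tfrac{t}{2}\mathrm{D})\in\mathrm{Aut}(\mathfrak{g})$ to automorphisms $\varphi_{t}$ of the simply connected $G$, reduce to the identity by left-invariance, and use $g_{0}$-self-adjointness of $\mathrm{Ric}$ (hence of $\mathrm{D}$) to collapse the symmetrized Lie-derivative term to $g_{0}(\mathrm{D}U,V)$. Your proof is correct, including the observation that nondegeneracy (not definiteness) of $g_{0}$ is all that is needed, so it matches the intended argument.
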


In the Riemannian case, Ricci solitons on homogeneous manifolds are
algebraic Ricci solitons (see \cite{J11}).
However, in the Lorentzian case there exist Ricci solitons which are not
algebraic Ricci solitons on $SL(2,\mathbb{R})$ (see \cite{BO11a}).
Recall that a Riemannian Ricci soliton on a semisimple Lie group is Einstein \cite{L01}.

It is natural to consider the opposite of Theorem \ref{RS}.
A Riemannian manifold $(M, g)$ is called a solvmanifold if there exists a
transitive solvable group of isometries.  Jablonski proved a Ricci soliton
on a solvmanifold is isometric to a solvsoliton (see \cite{J11}).  All
known examples of non-trivial Ricci solitons on homogeneous manifolds are
algebraic Ricci solitons.  In the Lorentzian case, there exists a Ricci
soliton which is not an algebraic Ricci soliton.  Indeed, Batat and Onda
proved that there exist Lorentzian Ricci solitons which are not algebraic
Ricci solitons on $SL(2,\mathbb{R})$.


It is known that all left-invariant pseudo-Riemannian metrics on $H_3$ are 
algebraic Ricci solitons.
We consider generalizations of $H$-type, namely pseudo$H$-type and 
$pH$-type.
One class of examples of pseudo$H$-type is the generalized Heisenberg 
groups $H(p,1)$ of dimension $2 p+1$ with negative definite center of
dimension $1$.
We can consider only pseudo$H$-type with nondegenerate center.  On the
other hand, $pH$-type allows both nondegenerate and degenerate
centers.  All $H (p,1)$ are also of $pH$-type.

Our main theorems are these. 
\begin{theorem} 
If $(N, \langle\cdot,\cdot\rangle )$ is of pseudo$H$-type with nondegenerate center,
then $(N, \langle\cdot,\cdot\rangle )$ has a nilsoliton.
\end{theorem}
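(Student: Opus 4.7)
The plan is to exploit the Clifford-like algebraic structure of pseudo$H$-type directly to compute the Ricci operator in closed form, then verify the algebraic Ricci soliton equation by choosing the derivation that rescales the center and the complementary subspace by compatible factors.

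First, I would fix a nondegenerate orthogonal decomposition $\mathfrak{n} = \mathfrak{v} \oplus \mathfrak{z}$, where $\mathfrak{z}$ is the (nondegenerate) center and $\mathfrak{v}$ is its orthogonal complement (hence also nondegenerate). Following the standard encoding of a $2$-step nilpotent metric Lie algebra, I introduce the $J$-maps $J_Z \in \mathrm{End}(\mathfrak{v})$ by $\langle J_Z X, Y\rangle = \langle [X,Y], Z\rangle$ for $X,Y \in \mathfrak{v}$ and $Z\in\mathfrak{z}$. The pseudo$H$-type condition then reads $J_Z^2 = -\langle Z,Z\rangle\,\mathrm{Id}_{\mathfrak{v}}$ (with signs dictated by convention), which is the key algebraic input.

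Second, I would apply the standard formulas for the Ricci operator of a $2$-step nilpotent pseudo-Riemannian Lie algebra, adapted to the Lorentzian signature. These formulas show $\mathrm{Ric}$ preserves the decomposition $\mathfrak{v}\oplus\mathfrak{z}$, with $\mathrm{Ric}|_{\mathfrak{v}}$ expressed in terms of a weighted sum $\sum_k \varepsilon_k J_{Z_k}^2$ over a pseudo-orthonormal basis $\{Z_k\}$ of $\mathfrak{z}$, and $\mathrm{Ric}|_{\mathfrak{z}}$ expressed in terms of traces of compositions $J_{Z_i}J_{Z_j}$. Substituting the pseudo$H$-type relation $J_{Z_k}^2 = -\varepsilon_k\,\mathrm{Id}$ collapses both expressions into scalar operators: $\mathrm{Ric}|_{\mathfrak{v}} = \alpha\,\mathrm{Id}_{\mathfrak{v}}$ and $\mathrm{Ric}|_{\mathfrak{z}} = \beta\,\mathrm{Id}_{\mathfrak{z}}$, where $\alpha$ and $\beta$ are constants depending only on $\dim\mathfrak{v}$, $\dim\mathfrak{z}$, and the signature. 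This is exactly where nondegeneracy of the center is used: it guarantees a pseudo-orthonormal basis $\{Z_k\}$ along which the Clifford-type relations apply cleanly.

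Third, I would produce the derivation. Because $[\mathfrak{v},\mathfrak{v}] \subset \mathfrak{z}$ and $[\mathfrak{v},\mathfrak{z}] = [\mathfrak{z},\mathfrak{z}] = 0$, the endomorphism $D$ defined by $D|_{\mathfrak{v}} = \lambda\,\mathrm{Id}$ and $D|_{\mathfrak{z}} = 2\lambda\,\mathrm{Id}$ satisfies $D[X,Y] = [DX,Y]+[X,DY]$ for every $\lambda \in \mathbb{R}$, so $D \in \mathrm{Der}(\mathfrak{n})$. The equation $\mathrm{Ric} = c\cdot\mathrm{Id} + D$ then splits into the two scalar equations $\alpha = c + \lambda$ and $\beta = c + 2\lambda$, which are solvable with $\lambda = \beta - \alpha$ and $c = 2\alpha - \beta$. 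This produces the desired nilsoliton.

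The main obstacle is the careful signed bookkeeping in Step 2: in Lorentzian signature the basis vectors $Z_k$ and the orthonormal basis of $\mathfrak{v}$ can carry signs $\varepsilon_k = \pm 1$, and one must verify that the combination $\sum_k \varepsilon_k J_{Z_k}^2$ genuinely collapses to a scalar on $\mathfrak{v}$ after invoking the pseudo$H$-type identity, and similarly that the off-diagonal terms contributing to $\mathrm{Ric}|_{\mathfrak{z}}$ vanish. The nondegeneracy of the center is precisely what lets one align the Clifford relation with a pseudo-orthonormal frame and conclude that $\mathrm{Ric}$ decomposes as claimed.
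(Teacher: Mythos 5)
Your proposal is correct and follows essentially the same route as the paper: compute the Ricci operator via the $2$-step nilpotent curvature formulas, use the pseudo$H$-type (Clifford) relations to collapse it to a scalar $\alpha$ on $\mathfrak{v}$ and $\beta$ on $\mathfrak{z}$ (the paper gets $\alpha=\tfrac{m}{2}$, $\beta=-\tfrac{n}{4}$), and then solve $\mathrm{Ric}=c\,\mathrm{Id}+D$ with the derivation that scales $\mathfrak{z}$ by twice the factor on $\mathfrak{v}$. Your closed-form solution $\lambda=\beta-\alpha$, $c=2\alpha-\beta$ reproduces exactly the paper's choice $c=m+\tfrac{n}{4}$.
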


\begin{theorem}\label{maintheorem}
Let $(N, \langle\cdot,\cdot\rangle )$ be of p$H$-type with a left-invariant Lorentzian metric. 
If $(N, \langle\cdot,\cdot\rangle )$ is a nilsoliton,  
then $(N, \langle\cdot,\cdot\rangle )$ satisfies one of following. 
\begin{itemize}
\item
If $(N, \langle\cdot,\cdot\rangle )$ has a nondegenerate center with the 
negative part, then the dimension of the center is $1$.
\item
If $(N, \langle\cdot,\cdot\rangle )$ has a nondegenerate and positive 
definite center, then $\dim N =3$.
\item
If $(N, \langle\cdot,\cdot\rangle )$ has a degenerate center, then $\dim N 
=3$.
\end{itemize} 
\end{theorem}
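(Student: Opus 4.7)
The plan is to fix, in each of the three cases, a convenient decomposition $\mathfrak{n} = \mathfrak{v} \oplus \mathfrak{z}$ with $\mathfrak{z}$ the center, compute $\mathrm{Ric}$ from the $J$-maps $\langle J_Z X, Y\rangle = \langle [X,Y], Z\rangle$ characterizing $pH$-type, and match the output to $c\,\mathrm{Id} + \mathrm{D}$ for some $\mathrm{D}\in \mathrm{Der}(\mathfrak{n})$. Three general facts will be used throughout: (i) any $\mathrm{D} \in \mathrm{Der}(\mathfrak{n})$ preserves $[\mathfrak{n},\mathfrak{n}] \subseteq \mathfrak{z}$, so the soliton equation splits into a $\mathfrak{v}$-part and a $\mathfrak{z}$-part (up to off-diagonal pieces controlled by symmetry of $\mathrm{Ric}$); (ii) the Ricci operator of a 2-step nilpotent pseudo-Riemannian Lie algebra has standard closed-form expressions on $\mathfrak{v}$ and on $\mathfrak{z}$ in terms of the $J_{Z_i}$; and (iii) the defining identity of $pH$-type evaluates these expressions explicitly.

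In the first case the Lorentzian signature forces $\mathfrak{v}$ to be positive definite and $\mathfrak{z}$ to have signature $(k-1,1)$. The $pH$-type identity shows $\mathrm{Ric}|_{\mathfrak{v}}$ is a scalar multiple of the identity, while $\mathrm{Ric}|_{\mathfrak{z}}$ is diagonal with eigenvalues of opposite sign on the spacelike and timelike parts of $\mathfrak{z}$. Since $\mathrm{D}|_{\mathfrak{z}}$ is self-adjoint and the derivation condition couples its eigenvalues to those of $\mathrm{D}|_{\mathfrak{v}}$ via the brackets, a short eigenvalue count shows the soliton equation \eqref{Soliton} can be satisfied only when $\dim\mathfrak{z} = 1$.

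In the second case the negative direction must lie in $\mathfrak{v}$, so $\mathfrak{v}$ inherits a Lorentzian inner product and becomes a module over the Clifford-type algebra generated by $\{J_{Z_i}\}$ in an indefinite setting. Classifying such modules compatibly with this indefinite form, and combining this with the scalar part of the soliton constraint on $\mathrm{Ric}|_{\mathfrak{v}}$, isolates the minimal admissible module and yields $\dim \mathfrak{v} = 2$ and $\dim \mathfrak{z} = 1$, hence $\dim N = 3$.

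The main obstacle is the third case, where $\mathfrak{z}$ is degenerate. Here one picks a null vector $Z_0 \in \mathfrak{z}\cap \mathfrak{z}^{\perp}$, completes it to a null frame extending across $\mathfrak{v}$, and rewrites the Ricci and derivation equations in that frame. The operator $J_{Z_0}$ is now null, and the soliton identity becomes a linear system which, once combined with the $pH$-type relations and the requirement that $\mathrm{D}$ is a genuine derivation, forces the remaining $J_{Z_i}$ to be rank-deficient in a manner incompatible with $\dim N > 3$. The delicate part is bookkeeping the off-diagonal entries of $\mathrm{D}$ that may now mix the null directions between $\mathfrak{v}$ and $\mathfrak{z}$; ensuring these cannot be used to enlarge $N$ beyond dimension $3$ is where the argument is tightest.
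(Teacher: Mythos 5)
Your overall skeleton --- split by the type of the center, compute $\mathrm{Ric}$ from the $pH$-identities, and exploit that a derivation $\mathrm{D}=\mathrm{Ric}-c\,\mathrm{Id}$ must act compatibly on brackets landing in the center --- is exactly the paper's strategy, and your first case is essentially the paper's argument: there $\mathrm{Ric}$ is scalar ($-\tfrac{m-2}{2}$) on $\mathfrak{E}$ but takes the two distinct values $-\tfrac{n}{4}$ and $+\tfrac{n}{4}$ on the timelike and spacelike parts of the center, while both $z_1$ and $z_2$ are brackets of vectors in $\mathfrak{E}$; the derivation condition then forces $n=0$ unless $m=1$. That part is fine.

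The other two cases, however, contain genuine gaps, and in both the mechanism you propose is not the one that actually closes the argument. In the positive-definite-center case, classifying Clifford-type modules over $\{J_{Z_i}\}$ cannot yield $\dim\mathfrak{v}=2$: nothing in module theory excludes non-minimal modules (the higher Heisenberg algebras with the timelike direction placed in $\mathfrak{v}$ are $pH$-type with $m=1$ and $n$ arbitrary even), nor does it by itself force $m=1$. Moreover your appeal to ``the scalar part of the soliton constraint on $\mathrm{Ric}|_{\mathfrak{v}}$'' starts from a false premise: in this case $\mathrm{Ric}|_{\mathfrak{E}}$ is \emph{not} scalar. The paper first proves $m+1\le n$ and builds an adapted orthonormal basis with $\langle e_1,e_1\rangle=-1$ and $j(z_\alpha)e_1=e_{\alpha+1}$, which produces the eigenvalues $\tfrac{1}{4}(n-4)$ on $\mathfrak{Z}$ and $-\tfrac{m-2m(i)}{2}$ on $e_i$ with $m(i)\in\{0,1\}$, $m(1)=0$; it is the incompatibility of the derivation condition across the different brackets $[e_i,e_j]$ (seen through these unequal eigenvalues) that forces $n=2$, $m=1$. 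Similarly, in the degenerate case the obstruction is not rank-deficiency of the $J_{Z_i}$: in the adapted null frame one has $\mathrm{Ric}\,u=0$ while the eigenvalues on $e_1$ and on $e_a$ ($a\ge 2$) are $-\tfrac{m}{2}$ and $-\tfrac{m-1}{2}$ respectively, and since $[v,e_1]=u$ and $[v,e_2]=z_1$ the derivation condition gives the contradictory values $c=-\tfrac{m}{2}$ and $c=-\tfrac{m-1}{2}$ once $n\ge 2$. As written, your proposal defers precisely these decisive computations to mechanisms that would not deliver the stated conclusions.
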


The paper is organized in the following way. 
In Section 2 we collect some basic facts concerning $2$-step nilpotent Lie 
groups equipped with left-invariant Lorentzian metrics. 
We introduce pseudo$H$-type and $pH$-type. 
In Section 3 we consider nilsolitons of pseudo$H$-type and $pH$-type 
and prove the main theorem. 

\section{2-step Nilpotent Lie groups}
Let $(N, g=\langle \cdot, \cdot\rangle, {\mathfrak n} )$ be a 2-step 
nilpotent Lie group with a left-invariant pseudo-Riemannian metric $g$ and
Lie algebra ${\mathfrak n}$.
We decompose ${\mathfrak n} $ as in Section 2 of \cite{CP99}.
$${\mathfrak n} = {\mathfrak z} \oplus {\mathfrak v} = {\mathfrak U}
\oplus {\mathfrak Z} \oplus {\mathfrak V} \oplus {\mathfrak E}$$
where ${\mathfrak z} = {\mathfrak U} \oplus {\mathfrak Z} $ is the center
of ${\mathfrak n} $ with degenerate part ${\mathfrak U}$.

\begin{df}
We define the involution $\iota $ by
\begin{equation*}
\iota = \left(
\begin{array}{cccc}
O & O  & I & O\\
O & E _m   & O & O\\
I & O  & O & O\\
O & O  & O & \bar{E}_n
\end{array}%
\right),
\end{equation*}
where $E_m = \mathrm{diag}(\varepsilon_1, \dots, \varepsilon_m)$, and
$\bar{E}_m = \mathrm{diag}(\bar{\varepsilon}_1, \dots, \bar{\varepsilon}_n)$.
We define the operator $J$ as
$$J_y = ad '_{\bullet} y,$$
and the operator $j$ as
$$j(y) = \iota ad '_{\bullet} \iota y, $$
for $y\in $ ${\mathfrak n}$, where $ad'$ denotes the adjoint of $ad$ with
respect to $g$.
\end{df}\noindent
The following are well known \cite{CP99}. 
\begin{prop}\label{prop_J}
We have 
$$\iota ^2 = Id$$
$$\langle x, \iota y \rangle = \langle \iota x, y\rangle $$
$$\langle J_z x, y \rangle + \langle x, J_z y\rangle = 0$$
$$\langle j (z) x, \iota y \rangle + \langle \iota x, j (z) y\rangle = 0$$
\end{prop}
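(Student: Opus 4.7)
The plan is to dispatch the four identities in order: the first is a direct block-matrix computation, the second reduces to symmetry of $\iota$ as a matrix, the third follows from the definition of the metric adjoint together with antisymmetry of the bracket, and the fourth combines the first three.

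For $\iota^{2}=\mathrm{Id}$, I would simply square the $4\times 4$ block matrix. The outer pair of identity blocks multiplies to give $I$ in the $(1,1)$- and $(3,3)$-positions, while the inner diagonal blocks produce $E_{m}^{2}=I$ and $\bar{E}_{n}^{2}=I$ because every $\varepsilon_{i},\bar{\varepsilon}_{i}\in\{\pm 1\}$. For the self-adjointness $\langle x,\iota y\rangle = \langle \iota x,y\rangle$, the key observation is that in the basis adapted to $\mathfrak{n}=\mathfrak{U}\oplus\mathfrak{Z}\oplus\mathfrak{V}\oplus\mathfrak{E}$, the Gram matrix of $\langle\cdot,\cdot\rangle$ coincides with $\iota$ itself: the degenerate part $\mathfrak{U}$ is paired hyperbolically with $\mathfrak{V}$ (the off-diagonal $I$-blocks), while $\mathfrak{Z}$ and $\mathfrak{E}$ carry the diagonal forms $E_{m}$ and $\bar{E}_{n}$. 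Since $\iota$ is visibly symmetric as a matrix, both $\langle x,\iota y\rangle$ and $\langle\iota x,y\rangle$ evaluate to $x^{T}\iota^{2}y$, and the identity is formal.

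The skew-symmetry of $J_{z}$ is immediate from the definition of the adjoint: $\langle J_{z}x,y\rangle = \langle\mathrm{ad}'_{x}z,y\rangle = \langle z,[x,y]\rangle$, and similarly $\langle x,J_{z}y\rangle = \langle z,[y,x]\rangle$; the two terms cancel by antisymmetry of the bracket. For the final identity, I would unpack $j(z)$ as the composition $\iota\circ J_{\iota z}$. Using the self-adjointness of $\iota$ together with $\iota^{2}=\mathrm{Id}$, the left-hand side rewrites as $\langle \iota J_{\iota z}x,\iota y\rangle = \langle J_{\iota z}x,y\rangle$, and symmetrically the right-hand side becomes $\langle x,J_{\iota z}y\rangle$; their sum then vanishes by the skew-symmetry of $J_{\iota z}$ just established.

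The only genuine obstacle is parsing the compressed notation $\iota\,\mathrm{ad}'_{\bullet}\iota y$ defining $j(z)$ and matching the conventions for the Gram matrix inherited from \cite{CP99}; once the setup is fixed, each of the four claims reduces to a one- or two-line computation.
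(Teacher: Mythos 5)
Your proof is correct. The paper itself offers no argument here --- it simply records these identities as ``well known'' from \cite{CP99} --- and your verification is the standard one: $\iota^2=\mathrm{Id}$ and the self-adjointness of $\iota$ follow from the block form (the Gram matrix in the adapted basis is indeed $\iota$, with $\mathfrak{U}$ and $\mathfrak{V}$ paired hyperbolically), skew-symmetry of $J_z$ comes from $\langle J_z x,y\rangle=\langle z,[x,y]\rangle$ and antisymmetry of the bracket, and the last identity follows by writing $j(z)=\iota\circ J_{\iota z}$ and moving the $\iota$'s across the inner product.
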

The next result was proved by Cordero and Parker.
\begin{theorem}[\cite{CP99}]\label{ricci2}
Let $N$ be a $2$-step nilpotent Lie group with a left invariant metric
tensor $\langle \cdot , \cdot \rangle$ 
and Lie algebra ${\mathfrak n} = {\mathfrak U} \oplus {\mathfrak Z}
\oplus {\mathfrak V} \oplus {\mathfrak E}$ decomposed as in Section 2
of \cite{CP99}.
For all $u\in {\mathfrak U}$, $z, z' \in {\mathfrak Z}$, $ v, v' \in
{\mathfrak V}$, $e\in {\mathfrak E}$, and $x \in {\mathfrak V} \oplus
{\mathfrak E}$, we have
\begin{eqnarray*}
\Rc (u, \bullet ) &  = & 0 \\
\Rc (z, z' ) &  = & \dfrac{1}{4} \sum_a^n \bar{\varepsilon}_a \langle
   j (\iota z) e_a, j (\iota z') e_a\rangle ,  \\
\Rc (v, z ) &  = & \dfrac{1}{4} \sum_a^n \bar{\varepsilon}_a \langle j (\iota v )  e_a, j (\iota z) e_a\rangle ,  \\
\Rc (e, z) &  = & 0 \\
\Rc (v, v' ) &  = & -\dfrac{1}{2} \sum_{\alpha}^m \varepsilon _{\alpha}
   \langle j (z_{\alpha} ) v, j (z_{\alpha}) v'\rangle + \dfrac{1}{4}
   \sum_a^n \bar{\varepsilon}_a \langle j (\iota v) e_a, j (\iota v') e_a\rangle,  \\
\Rc (x, e ) &  = & -\dfrac{1}{2} \sum_{\alpha}^m \varepsilon _{\alpha}
   \langle j (z_{\alpha}) x, j (z_{\alpha}) e\rangle ,
\end{eqnarray*}
where $\varepsilon _{\alpha}  = \langle z_{\alpha}, z_{\alpha} \rangle$
and $\bar{\varepsilon}_a = \langle e_a, e_a\rangle$.
\end{theorem}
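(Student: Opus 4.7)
The plan is to proceed by the standard Koszul-formula-then-trace route. First I would record the Levi--Civita connection of the left-invariant metric $\langle\cdot,\cdot\rangle$. On a Lie group,
\begin{equation*}
2\langle \nabla_X Y, Z\rangle = \langle [X,Y], Z\rangle - \langle [Y,Z], X\rangle + \langle [Z,X], Y\rangle,
\end{equation*}
and since $[\mathfrak{n},\mathfrak{n}]\subseteq \mathfrak{z}$, most of the brackets on the right vanish once we restrict $X,Y,Z$ to the summands $\mathfrak{U}\oplus\mathfrak{Z}\oplus\mathfrak{V}\oplus\mathfrak{E}$. Using the defining identities of Proposition \ref{prop_J}, in particular the skew-adjointness $\langle J_z x, y\rangle + \langle x, J_z y\rangle = 0$ and the analogous relation for $j$, one can express $\nabla_X Y$ in terms of $J$ and $j$ applied to components in $\mathfrak{V}\oplus\mathfrak{E}$ and $\mathfrak{Z}$.

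Second, I would form the curvature $R(X,Y)Z = \nabla_X\nabla_Y Z - \nabla_Y\nabla_X Z - \nabla_{[X,Y]}Z$. The $\nabla_{[X,Y]}Z$ term involves a bracket lying in $\mathfrak{z}$, and splits into a $\mathfrak{U}$-part and a $\mathfrak{Z}$-part; the remaining two terms are compositions of $J$ and $j$. After grouping, one sees that the Riemann tensor is built out of operators of the form $J_z\circ J_{z'}$ on $\mathfrak{V}$ and $j(\iota\,\cdot\,)\circ j(\iota\,\cdot\,)$ on $\mathfrak{E}$, which is exactly what is needed to recover the pairing structure in Theorem \ref{ricci2}.

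Third, I would compute the Ricci tensor by tracing. The subtlety here is that in the pseudo-Riemannian degenerate setting the naive trace $\sum_i \langle R(\cdot,e_i)\cdot,e_i\rangle$ is wrong; one must trace using a basis and its $\iota$-paired dual basis. Concretely, pick an orthonormal-type basis $\{u_\mu, z_\alpha, v_\beta, e_a\}$ adapted to the decomposition, with $\iota$ swapping $\mathfrak{U}\leftrightarrow\mathfrak{Z}$ and acting diagonally on $\mathfrak{V},\mathfrak{E}$ by the signs $\varepsilon,\bar\varepsilon$. The trace over $\mathfrak{U}\oplus\mathfrak{Z}$ then contributes the $-\tfrac12\sum_\alpha \varepsilon_\alpha \langle j(z_\alpha)\cdot,j(z_\alpha)\cdot\rangle$ pieces (from the $J$-terms), while the trace over $\mathfrak{E}$ contributes the $\tfrac14\sum_a \bar\varepsilon_a \langle j(\iota\cdot)e_a, j(\iota\cdot)e_a\rangle$ pieces. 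The vanishing identities $\rc(u,\bullet)=0$ and $\rc(e,z)=0$ then fall out because $u\in\mathfrak{U}$ is null and pairs only with $\mathfrak{Z}$ via $\iota$, and because $j(\iota z)$ annihilates $\mathfrak{E}$ paired against $e$ in the relevant trace.

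The main obstacle I expect is purely bookkeeping rather than conceptual: one must keep track of which factor each vector lives in, use the correct $\iota$-dual basis rather than a self-dual one, and maintain consistent sign conventions for $\varepsilon_\alpha$ and $\bar\varepsilon_a$ while distributing skew-symmetries across the three curvature terms. Once the dual pairing is set up correctly, the degenerate case reduces to the nondegenerate one by a conjugation with $\iota$, and the stated formulas assemble term by term.
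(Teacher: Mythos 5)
The paper does not prove this theorem at all --- it is imported verbatim from \cite{CP99} --- so the only fair comparison is with the computation in that reference, which does follow your general route: Koszul formula for the left-invariant connection, curvature, then a trace over a basis adapted to the splitting $\mathfrak{U}\oplus\mathfrak{Z}\oplus\mathfrak{V}\oplus\mathfrak{E}$, with the degenerate directions handled through $\iota$ and $j$. At the level of strategy your outline is therefore the right one. But as written it is only a plan, and it contains a concrete structural error that would corrupt exactly the step you identify as the delicate one. You assert that $\iota$ swaps $\mathfrak{U}\leftrightarrow\mathfrak{Z}$ and acts diagonally on $\mathfrak{V}$ and $\mathfrak{E}$. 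Reading off the block matrix defining $\iota$ in the paper (blocks ordered $\mathfrak{U},\mathfrak{Z},\mathfrak{V},\mathfrak{E}$), the identity blocks sit in the $(\mathfrak{V},\mathfrak{U})$ and $(\mathfrak{U},\mathfrak{V})$ positions: $\iota$ swaps $\mathfrak{U}\leftrightarrow\mathfrak{V}$ and acts diagonally (by $E_m$, $\bar E_n$) on $\mathfrak{Z}$ and $\mathfrak{E}$. This is not cosmetic: the metric pairs the null part $\mathfrak{U}$ of the center with the null complement $\mathfrak{V}$ of $\mathfrak{v}$, which is why $j(\iota v)$ for $v\in\mathfrak{V}$ is $j$ of a \emph{central} element and why $\varrho(v,z)$ can be nonzero while $\varrho(u,\bullet)=0$. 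With your pairing the $\iota$-dual basis used in the trace is wrong, the terms $\frac14\sum_a\bar\varepsilon_a\langle j(\iota v)e_a,j(\iota z)e_a\rangle$ do not even parse ($\iota v$ would not be central), and the vanishing of $\varrho(u,\bullet)$ gets the wrong explanation ($u$ pairs only with $\mathfrak{V}$, not with $\mathfrak{Z}$).

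A second, smaller objection: the closing claim that ``the degenerate case reduces to the nondegenerate one by a conjugation with $\iota$'' is not right. The restriction of the metric to the center genuinely degenerates on $\mathfrak{U}=\mathfrak{z}\cap\mathfrak{z}^\perp$, and no change of basis removes that; the role of $\iota$ is to let one define $j(y)=\iota\,\mathrm{ad}'_\bullet\,\iota y$ so that the connection and curvature can be written down \emph{despite} the degeneracy, not to conjugate it away. Finally, even granting the corrected pairing, the proposal never actually computes the connection coefficients, the curvature, or the traces, so none of the six formulas (in particular the relative factors $-\frac12$ versus $\frac14$ and the two vanishing statements) is established. To make this a proof you would need to write out $\nabla$ on each pair of summands, assemble $R$, and carry out the trace with the $\{\mathfrak{U},\mathfrak{V}\}$-dual basis explicitly.
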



\subsection{$H$-type}

\begin{df}
Let $(N, \langle\cdot,\cdot\rangle )$ be a 2-step nilpotent Lie group with a 
left-invariant Riemannian metric.
If $({\mathfrak n}, \langle\cdot,\cdot\rangle )$ satisfies
$$J_z^2=-\langle z, z\rangle I$$
for all $z\in {\mathfrak z}$, 
then $(N, \langle\cdot,\cdot\rangle )$ is said to be of $H$-type.
\end{df}
The 3-dimensional Heisenberg group and all higher-dimensional Heisenberg
groups are of $H$-type.
The next formulas are easily checked. 





\begin{prop}\label{prop_H}
If $({\mathfrak n}, \langle \cdot,\cdot \rangle )$ is of p$H$-type, 
then the following identities hold:
\begin{eqnarray*}
\langle J_z e, J_z e\rangle & = & \langle z, z\rangle \langle e, e\rangle, \\
\langle J_z e, J_z e'\rangle & = & \langle z, z\rangle \langle e, e'\rangle, \\
\langle J_z e, J_{z'} e\rangle & = & \langle z, z'\rangle \langle e, e\rangle, \\
J_z J_{z'} + J_{z'} J_{z} & = & -2 \langle z, z'\rangle I. 
\end{eqnarray*}
\end{prop}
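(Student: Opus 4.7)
The plan is to derive all four identities by combining two ingredients that are already available: the skew-adjointness of $J_z$ with respect to $\langle \cdot , \cdot \rangle$ (third equation of Proposition \ref{prop_J}) and the defining identity $J_z^2 = -\langle z,z\rangle I$ of $pH$-type (taken to be the obvious extension to the pseudo-Riemannian setting of the $H$-type definition given above). Everything else is a polarization argument.

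First I would establish the first identity directly. By skew-adjointness,
\[
\langle J_z e, J_z e\rangle \;=\; -\langle e, J_z^2 e\rangle \;=\; -\langle e, -\langle z,z\rangle e\rangle \;=\; \langle z,z\rangle \langle e,e\rangle .
\]
The only subtlety worth noting is that this manipulation is insensitive to the signature of $\langle\cdot,\cdot\rangle$, so the Riemannian proof carries over verbatim to the $pH$-type setting.

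Next I would obtain the second and third identities by polarization of the first. For the second, replace $e$ by $e+e'$ in the first identity, expand the left-hand side using bilinearity and the right-hand side using bilinearity of the metric, then cancel the terms $\langle z,z\rangle\langle e,e\rangle$ and $\langle z,z\rangle\langle e',e'\rangle$; what remains is $2\langle J_z e, J_z e'\rangle = 2\langle z,z\rangle \langle e,e'\rangle$. For the third, perform the analogous polarization by replacing $z$ by $z+z'$ in the first identity; symmetry of $\langle\cdot,\cdot\rangle$ ensures that the mixed terms $\langle J_z e, J_{z'} e\rangle$ collect into $2\langle J_z e, J_{z'} e\rangle$, and canceling the pure terms yields the claim.

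Finally I would deduce the fourth (operator-level) identity by polarizing the defining relation itself. From
\[
J_{z+z'}^2 \;=\; -\langle z+z', z+z'\rangle I
\]
and expanding both sides, then subtracting $J_z^2 = -\langle z,z\rangle I$ and $J_{z'}^2 = -\langle z',z'\rangle I$, one is left with $J_z J_{z'} + J_{z'} J_z = -2\langle z,z'\rangle I$. There is no real obstacle here; the only point to watch is that Proposition \ref{prop_J} provides skew-adjointness in the pseudo-Riemannian sense without any reality/positivity caveats, so all four computations go through whether the center or the complement carry indefinite inner products.
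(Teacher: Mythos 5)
Your proof is correct and is exactly the ``easily checked'' computation the paper omits: skew-adjointness of $J_z$ from Proposition \ref{prop_J} combined with $J_z^2=-\langle z,z\rangle I$, followed by polarization in $e$ and in $z$ (using linearity of $z\mapsto J_z$). Your reading of the hypothesis as the $H$-type relation $J_z^2=-\langle z,z\rangle I$ (rather than the $j(z)$, $\iota$ version) is also the intended one, since the proposition sits in the $H$-type subsection immediately after that definition.
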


\subsection{pseudo$H$-type}
We consider one generalization of $H$-type.
\begin{df}
Let $(N, \langle\cdot,\cdot\rangle )$ be 2-step nilpotent Lie group with a left-invariant
pseudo-Riemannian metric and nondegenerate center.
If $({\mathfrak n}, \langle\cdot,\cdot\rangle )$ satisfies 
$$J_z^2 = \langle z,z\rangle I$$ 
for all $z\in {\mathfrak z}$,  
then $(N, \langle\cdot,\cdot\rangle )$ is said to be of pseudo$H$-type. 
\end{df}
We remark that a definition of pseudo$H$-type for a degenerate center is not
known.
The 3-dimensional Heisenberg group with non-null center, and the 
generalized Heisenberg groups $H(p,1)$ with negative-definite centers
and left-invariant Lorentzian metrics, are of pseudo$H$-type. 






The following was proved by Ciatti. 
\begin{prop}[\cite{Ciatti}]\label{prop_pseudoH}
Let $({\mathfrak n}, \langle\cdot,\cdot\rangle )$ be of pseudo$H$-type. We have
\begin{eqnarray*}
\langle J_z e, J_z e\rangle & = & -\langle z, z\rangle \langle e, e\rangle, \\
\langle J_z e, J_z e'\rangle & = & -\langle z, z\rangle \langle e, e'\rangle, \\
\langle J_z e, J_{z'} e\rangle & = & -\langle z, z'\rangle \langle e, e\rangle, \\
J_z J_{z'} + J_{z'} J_{z} & = & 2 \langle z, z'\rangle I. 
\end{eqnarray*}
for all  $z, z'\in {\mathfrak Z}$ and $e, e'\in {\mathfrak E}$.
\end{prop}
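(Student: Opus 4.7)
The plan is to derive all four identities from the single defining relation $J_z^2 = \langle z, z\rangle I$ combined with the skew-adjointness of $J_z$ supplied by Proposition~\ref{prop_J}. The key observation is that once the anticommutation relation (identity four) is available, each of the three bilinear identities follows from it by one or two applications of skew-adjointness, so I will prove that relation first.

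First I would establish the anticommutation identity $J_z J_{z'} + J_{z'} J_z = 2\langle z, z'\rangle I$ by polarization. Applying the defining relation to $z + z'$ gives $(J_z + J_{z'})^2 = \langle z+z', z+z'\rangle I$, and expanding the left-hand side while using $J_z^2 = \langle z, z\rangle I$ and $J_{z'}^2 = \langle z', z'\rangle I$ yields exactly the cross-term identity.

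Next I would handle the first two identities together. Using the skew-adjointness of $J_z$,
\begin{equation*}
\langle J_z e, J_z e'\rangle = -\langle e, J_z^2 e'\rangle = -\langle z,z\rangle \langle e, e'\rangle,
\end{equation*}
which is the second identity; setting $e' = e$ yields the first. For the third identity I would symmetrize in $z, z'$ and combine the two tools already in hand:
\begin{equation*}
2\langle J_z e, J_{z'} e\rangle = -\langle e, (J_z J_{z'} + J_{z'} J_z) e\rangle = -2\langle z, z'\rangle \langle e, e\rangle.
\end{equation*}

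No step looks technically difficult, and the calculations are essentially routine verifications. The only minor obstacle is recognizing the correct logical order: the anticommutation relation must be proved before the third identity, since the latter genuinely depends on the ability to reduce $J_zJ_{z'} + J_{z'}J_z$ to a scalar operator.
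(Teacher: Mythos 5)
Your proof is correct and complete: polarizing $J_z^2=\langle z,z\rangle I$ gives the anticommutation relation, and skew-adjointness of $J_z$ (from Proposition~\ref{prop_J}) then yields the three bilinear identities exactly as you describe. The paper itself supplies no proof here --- it simply cites Ciatti --- so your argument is the standard self-contained derivation one would expect, and the only point worth double-checking (the linearity $J_{z+z'}=J_z+J_{z'}$ needed for the polarization step) holds because $J_y=\mathrm{ad}'_\bullet\, y$ is linear in $y$.
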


\subsection{$pH$-type}
Here is another generalization of $H$-type.
\begin{df}
Let $(N, \langle\cdot,\cdot\rangle )$ be 2-step nilpotent Lie group with a 
left-invariant pseudo-Riemannian
metric. If $({\mathfrak n}, \langle\cdot,\cdot\rangle )$ satisfies
$$j (z) ^2 = -\langle z, \iota z\rangle I, $$
then $(N, \langle\cdot,\cdot\rangle )$ is said to be of $pH$-type \cite{CP99}.
\end{df}\noindent
The generalized Heisenberg groups $H(p,1)$ are also of $pH$-type; see 
\cite{CP99}.

The following was proved by Cordero and Parker. 
\begin{prop}[\cite{CP99}]\label{prop_pH}
Let $({\mathfrak n}, \langle\cdot,\cdot\rangle )$ be of $pH$-type.
We have
\begin{eqnarray*}
\langle j (z) e, \iota j (z) e'\rangle & = & \langle z, \iota z\rangle \langle e, \iota e'\rangle, \\
\langle j (z) e, \iota j (z') e\rangle & = & \langle z, \iota z'\rangle \langle e, \iota e\rangle, \\
j (z)  j (z') + j (z') j (z) & = & -2 \langle z, \iota z'\rangle I. 
\end{eqnarray*}
for any  $z, z'\in {\mathfrak z}$ and $e, e'\in {\mathfrak v}$.
\end{prop}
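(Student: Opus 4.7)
The plan is to mimic the standard polarization argument from the Riemannian $H$-type case, using the defining identity $j(z)^2 = -\langle z, \iota z\rangle I$ together with the properties of the involution $\iota$ and the ``skew'' relation $\langle j(z)x,\iota y\rangle + \langle \iota x, j(z)y\rangle = 0$ recorded in Proposition \ref{prop_J}. I would prove the third (anticommutator) identity first, then the first identity directly from the definition, and finally the second by averaging over a symmetry.

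For the anticommutator, I would replace $z$ by $z+z'$ in $j(z)^2 = -\langle z,\iota z\rangle I$. Bilinearity of $j$ expands the left-hand side as $j(z)^2 + j(z)j(z') + j(z')j(z) + j(z')^2$, while the right-hand side becomes
\[
-\bigl(\langle z,\iota z\rangle + \langle z,\iota z'\rangle + \langle z',\iota z\rangle + \langle z',\iota z'\rangle\bigr)I.
\]
Since $\langle z,\iota z'\rangle = \langle \iota z, z'\rangle = \langle z',\iota z\rangle$ by the symmetry of $\iota$ in Proposition \ref{prop_J}, subtracting the diagonal terms $j(z)^2 + j(z')^2$ yields $j(z)j(z') + j(z')j(z) = -2\langle z,\iota z'\rangle I$.

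For the first identity, I would apply the skew relation with $x=e$ and $y = j(z)e'$:
\[
\langle j(z)e, \iota j(z)e'\rangle = -\langle \iota e, j(z)^2 e'\rangle = \langle z,\iota z\rangle\,\langle \iota e, e'\rangle = \langle z,\iota z\rangle\,\langle e, \iota e'\rangle,
\]
where the last equality uses $\langle \iota e, e'\rangle = \langle e, \iota e'\rangle$. For the second identity, the same skew relation gives $\langle j(z)e,\iota j(z')e\rangle = -\langle \iota e, j(z)j(z')e\rangle$, and similarly with $z,z'$ swapped. By symmetry of $\langle\cdot,\cdot\rangle$ together with $\langle \iota x, y\rangle = \langle x,\iota y\rangle$, the two left-hand sides coincide, so adding them and invoking the already-established anticommutator produces
\[
2\langle j(z)e,\iota j(z')e\rangle = -\langle \iota e, (j(z)j(z') + j(z')j(z))e\rangle = 2\langle z,\iota z'\rangle\,\langle e,\iota e\rangle.
\]

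There is no real obstacle here beyond bookkeeping: the only subtlety is the consistent use of the involution $\iota$ when moving it across the pairing, and verifying the symmetry $\langle j(z)e,\iota j(z')e\rangle = \langle j(z')e,\iota j(z)e\rangle$ needed to extract the second identity from the anticommutator rather than only its polarized sum.
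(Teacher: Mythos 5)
Your proof is correct: the polarization of $j(z)^2=-\langle z,\iota z\rangle I$ in $z$ gives the anticommutator, and the skew relation $\langle j(z)x,\iota y\rangle+\langle\iota x,j(z)y\rangle=0$ from Proposition \ref{prop_J}, together with the symmetry $\langle j(z)e,\iota j(z')e\rangle=\langle j(z')e,\iota j(z)e\rangle$, yields the two inner-product identities exactly as you describe. The paper itself gives no proof, merely citing \cite{CP99}; your argument is the standard one and matches what that reference does in spirit, so there is nothing to flag.
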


\section{Nilsolitons}

Lauret \cite{L01} stated that all $H$-type have nilsolitons.
The proof is similar to that of Theorem \ref{maintheorem1}. 

\subsection{Nilsolitons of pseudo$H$-type}
We prove that most any pseudo$H$-type $N$ has a nilsoliton. 
This is similar to the Riemannian $H$-type case.  

\begin{theorem}\label{maintheorem1}
If $(N, \langle\cdot,\cdot\rangle )$ is of pseudo$H$-type with nondegenerate center,
then $(N, \langle\cdot,\cdot\rangle )$ has a nontrivial nilsoliton.
\end{theorem}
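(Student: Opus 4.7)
The plan is to compute the Ricci tensor directly from Theorem \ref{ricci2}, collapse it to a scalar on each summand of a natural decomposition $\mathfrak{n} = \mathfrak{Z} \oplus \mathfrak{E}$, and then solve for $c$ so that $D := \mathrm{Ric} - c\,I$ is a derivation.

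First, since the center is nondegenerate, $\mathfrak{U} = 0$ and $\iota$ restricts to the identity on $\mathfrak{z}$. I then invoke the structure theory of pseudo$H$-type to argue that the orthogonal complement of $\mathfrak{z}$ contains no null summand, so $\mathfrak{n} = \mathfrak{Z} \oplus \mathfrak{E}$ and $j = J$ throughout. Under this reduction, Theorem \ref{ricci2} collapses to $\Rc(z,e) = 0$ together with
\begin{align*}
\Rc(z,z') &= \tfrac{1}{4}\sum_a \bar{\varepsilon}_a \langle J_z e_a, J_{z'} e_a\rangle, \\
\Rc(e,e') &= -\tfrac{1}{2}\sum_\alpha \varepsilon_\alpha \langle J_{z_\alpha} e, J_{z_\alpha} e'\rangle.
\end{align*}

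Next, I substitute the pseudo$H$-type identities of Proposition \ref{prop_pseudoH}: $\langle J_z e_a, J_{z'} e_a\rangle = -\langle z,z'\rangle\,\bar{\varepsilon}_a$ and $\langle J_{z_\alpha} e, J_{z_\alpha} e'\rangle = -\varepsilon_\alpha\langle e,e'\rangle$. Using $\bar{\varepsilon}_a^2 = \varepsilon_\alpha^2 = 1$ and writing $m = \dim \mathfrak{Z}$, $n = \dim \mathfrak{E}$, one obtains $\Rc(z,z') = -\tfrac{n}{4}\langle z,z'\rangle$ and $\Rc(e,e') = \tfrac{m}{2}\langle e,e'\rangle$. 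Hence $\mathrm{Ric}$ has eigenvalue $-n/4$ on $\mathfrak{Z}$ and $m/2$ on $\mathfrak{E}$.

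Finally, because $[\mathfrak{E},\mathfrak{E}] \subseteq \mathfrak{Z}$, an operator acting as scalars $\lambda$ on $\mathfrak{E}$ and $\mu$ on $\mathfrak{Z}$ is a derivation of $\mathfrak{n}$ precisely when $\mu = 2\lambda$. Setting $D := \mathrm{Ric} - cI$ and imposing $-n/4 - c = 2(m/2 - c)$ forces the unique choice $c = m + n/4 \neq 0$; with this $c$ the operator $D$ is a nonzero derivation, so \eqref{Soliton} is satisfied and produces a nontrivial nilsoliton. The main obstacle is the structural reduction in the first step: verifying that for pseudo$H$-type with nondegenerate center the complement of $\mathfrak{z}$ is purely nondegenerate. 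If an $\mathfrak{V}$ summand were allowed, Theorem \ref{ricci2} would contribute extra $j(\iota v)$ terms and the identities of Proposition \ref{prop_pseudoH}, stated only for $e \in \mathfrak{E}$, would need to be extended before the eigenvalue computation could conclude. Modulo that point, the argument runs in parallel to Lauret's Riemannian $H$-type calculation.
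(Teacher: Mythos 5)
Your proposal is correct and follows essentially the same route as the paper: reduce to $\mathfrak{n}=\mathfrak{Z}\oplus\mathfrak{E}$ (with $\mathfrak{U}=\mathfrak{V}=0$ automatic once the center is nondegenerate), use Proposition \ref{prop_pseudoH} to get $\mathrm{Ric}=-\tfrac{n}{4}I$ on $\mathfrak{Z}$ and $\tfrac{m}{2}I$ on $\mathfrak{E}$, and solve the derivation condition $\mu=2\lambda$ to obtain $c=m+\tfrac{n}{4}$. The only difference is cosmetic: you make explicit the derivation check and the vanishing of the null summands, which the paper leaves as an unproved lemma and an implicit reduction.
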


\begin{proof}
Let $(N, \langle\cdot,\cdot\rangle )$ be of pseudo$H$-type with nondegenerate center.
Then the Lie algebra is decomposed as ${\mathfrak n} = {\mathfrak Z}
\oplus {\mathfrak E}$ (${} = \mathfrak{z}\oplus\mathfrak{v}$ in Eberlein),
and the Ricci tensor is given by
\begin{eqnarray*}
\Rc (z, z' ) &  = & -\dfrac{n}{4} \langle z, z'\rangle ,  \\
\Rc (e, z) &  = & 0 \\
\Rc (e, e' ) &  = & \dfrac{m}{2} \langle e, e' \rangle,
\end{eqnarray*}
where $z, z'\in {\mathfrak Z} = \lsp z_1, \dots, z_m\rsp$ and $e,
e'\in {\mathfrak E} = \lsp e_1, \dots, e_n\rsp$.
So, the Ricci operator is given by
\begin{equation*}
\mathrm{Ric} = \left(
\begin{array}{cc}
-\dfrac{n}{4} I_m& O  \\
O & \dfrac{m}{2} I_n
\end{array}%
\right) .
\end{equation*}
\begin{lm}
If $D$ satisfies
\begin{equation}\label{derivation}
    \begin{aligned}
D z & = -\Big( m + \frac{n}{2}\Big) z, \\
D e & = -\Big(\frac{m}{2} + \frac{n}{4}\Big) e,
    \end{aligned}
\end{equation}
then $D \in \mathrm{Der}(\mathfrak{n})$.
\end{lm}
Let $D$ be the derivation (\ref{derivation}).
Put $c=m+\dfrac{n}{4}$.
It is easy to check that $(N, \langle\cdot,\cdot\rangle )$ satisfies
equation (\ref{Soliton}).  
Therefore this is a nilsoliton.
\end{proof}

\subsection{Nilsolitons of $pH$-type with nondegenerate center}
Next, we consider $pH$-type with a nondegenerate center and nilsolitons.
In this subsection we consider only Lorentzian metrics with nondegenerate center, 
and the center has the negative part $\langle z_1, z_1\rangle =-1$.
Since $j (a) x \in {\mathfrak E}$ for any $a\in {\mathfrak Z}$ and $x\in {\mathfrak E}$,
 we have
\begin{eqnarray*}
\Rc (z, z' ) &  = & \dfrac{1}{4} \sum_a^n \bar{\varepsilon}_a \langle  j (\iota z) e_a, j (\iota z') e_a\rangle ,  \\
 &  = & \dfrac{1}{4} \sum_a^n \bar{\varepsilon}_a \langle  j (\iota z) e_a, \iota j (\iota z') e_a\rangle ,  \\
 &  = & \dfrac{1}{4} \sum_a^n \bar{\varepsilon}_a \langle z, \iota z'\rangle \langle  e_a, \iota e_a\rangle ,  \\
 &  = & \dfrac{n}{4}  \langle z, \iota z'\rangle, \\
\Rc (e, e' ) &  = & -\dfrac{1}{2} \sum_{\alpha}^m \varepsilon _{\alpha}  \langle j (z_{\alpha}) e, j (z_{\alpha}) e'\rangle , \\
 &  = & -\dfrac{1}{2} \sum_{\alpha}^m \varepsilon _{\alpha} \langle e, \iota e'\rangle, \\
 &  = & -\dfrac{m-2}{2} \langle e, \iota e'\rangle.
\end{eqnarray*}
The Ricci operator is given by
\begin{itemize}
\item
if $m\ge 2$,
\begin{equation*}
\mathrm{Ric} = \left(
\begin{array}{ccc}
-\dfrac{n}{4} & O & O  \\
O & \dfrac{n}{4} I_{m-1} & O  \\
O & O & -\dfrac{m-2}{2} I_n
\end{array}%
\right) .
\end{equation*}
\item
if $m=1$,
\begin{equation*}
\mathrm{Ric} = \left(
\begin{array}{cc}
-\dfrac{n}{4} & O \\
O & \dfrac{1}{2} I_n
\end{array}%
\right) .
\end{equation*}
\end{itemize}

We obtain
\begin{theorem}\label{maintheorem2}
Let $(N\langle\cdot,\cdot\rangle )$ be of $pH$-type with nondegenerate 
center, and the center has
the negative part $\langle z_1, z_1\rangle =-1$.
\begin{itemize}
\item
If $m=1$, then $(N\langle\cdot,\cdot\rangle )$ has a nontrivial nilsoliton.
\item
If $m\ge 2$, then $(N\langle\cdot,\cdot\rangle )$ does not have a 
nontrivial nilsoliton.
\end{itemize}
\end{theorem}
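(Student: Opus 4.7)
The plan is to treat the two cases separately, since they have very different characters: $m=1$ is constructive (just exhibit the derivation), while $m\ge 2$ is a rigidity/obstruction argument.

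For $m=1$, the Ricci operator has only two eigenspaces, $\mathfrak{Z}=\mathbb{R}z_1$ and $\mathfrak{E}$, with eigenvalues $-n/4$ and $1/2$. Following the template of Theorem \ref{maintheorem1}, I would look for $D$ acting by scalars $\lambda$ on $\mathfrak{Z}$ and $\mu$ on $\mathfrak{E}$. Since $[\mathfrak{E},\mathfrak{E}]\subseteq\mathfrak{Z}$, such a diagonal $D$ lies in $\mathrm{Der}(\mathfrak{n})$ precisely when $\lambda=2\mu$, so imposing $\mathrm{Ric}=cI+D$ becomes a $2\times 2$ linear system in $c,\mu$ whose unique solution produces the desired nontrivial nilsoliton.

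For $m\ge 2$, suppose for contradiction that $\mathrm{Ric}=cI+D$ for some $D\in\mathrm{Der}(\mathfrak{n})$. The Ricci operator displayed above is block diagonal with respect to $\mathbb{R}z_1\oplus\mathrm{span}(z_2,\ldots,z_m)\oplus\mathfrak{E}$ and is already a scalar within each block, so $D=\mathrm{Ric}-cI$ must also act as scalars $\lambda_1$, $\lambda_2$, $\mu$ on these three subspaces respectively. Applying the derivation identity to $e,e'\in\mathfrak{E}$,
\[
D[e,e']=[De,e']+[e,De']=2\mu\,[e,e'],
\]
shows that $D|_{\mathfrak{Z}}$ must act as $2\mu$ on the image of the bracket map $\mathfrak{E}\wedge\mathfrak{E}\to\mathfrak{Z}$. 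Once I establish that this image is all of $\mathfrak{Z}$, both $\lambda_1$ and $\lambda_2$ are forced to equal $2\mu$, yielding $-n/4-c=n/4-c$, hence $n=0$, contradicting $n\ge 1$.

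The step I expect to be the main technical point is therefore the surjectivity of the bracket onto each of $\mathbb{R}z_1$ and $\mathrm{span}(z_2,\ldots,z_m)$. Via the identity $\langle J_z e,e'\rangle=\langle z,[e,e']\rangle$ this reduces to showing $J_{z_\alpha}\neq 0$ on $\mathfrak{E}$ for every $\alpha$. Since the center is nondegenerate, $\iota$ preserves $\mathfrak{Z}$ by $\iota z_\alpha=\varepsilon_\alpha z_\alpha$, and in the Lorentzian signature considered here the remaining signs force $\iota|_{\mathfrak{E}}=I$; the definition $j(z)=\iota\,ad'_\bullet\,\iota z$ then gives $J_{z_\alpha}=\varepsilon_\alpha j(z_\alpha)$. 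The $pH$-type identity $j(z_\alpha)^2=-\langle z_\alpha,\iota z_\alpha\rangle I=-I$ then makes $j(z_\alpha)$, and hence $J_{z_\alpha}$, invertible on $\mathfrak{E}$. This is precisely where the $pH$-type hypothesis is used in an essential way: without it one could not rule out that some $J_{z_\alpha}$ vanishes on $\mathfrak{E}$, which would break the derivation obstruction.
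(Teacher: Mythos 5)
Your proposal is correct and follows essentially the same route as the paper: for $m=1$ you exhibit the same diagonal derivation ($\lambda=2\mu$, giving $c=1+\tfrac{n}{4}$), and for $m\ge 2$ you derive the same incompatibility $-\tfrac{n}{4}-c=\tfrac{n}{4}-c$ from the fact that $D$ must act as the single scalar $2\mu$ on $[\mathfrak{E},\mathfrak{E}]=\mathfrak{Z}$, where the paper simply asserts this surjectivity and plugs in $[e,e']=z_1$, $[e'',e''']=z_2$, while you justify it via $j(z)^2=-\langle z,\iota z\rangle I$. One microscopic point: surjectivity needs $J_z|_{\mathfrak{E}}\neq 0$ for every nonzero $z\in\mathfrak{Z}$, not only for the basis vectors $z_\alpha$, but your invertibility argument applies verbatim to arbitrary $z$ since $\langle z,\iota z\rangle=\sum_\alpha (z^\alpha)^2>0$ on the nondegenerate center.
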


\begin{proof}
Suppose that $m=1$.
\begin{lm}
If $D$ satisfies
\begin{equation}\label{derivation2}
    \begin{aligned}
D z & = \Big( -1 - \frac{n}{2}\Big) z, \\
D e & = \Big(-\frac{1}{2} - \frac{n}{4}\Big) e,
    \end{aligned}
\end{equation}
then $D \in \mathrm{Der}(\mathfrak{n})$.
\end{lm}\noindent
Let $D$ be the derivation (\ref{derivation2}). 
Put $c= 1+\dfrac{n}{4}$.
It is easy to check that $(N, \langle\cdot,\cdot\rangle )$ satisfies
equation (\ref{Soliton}).  

Next, suppose $m\ge 2$. \nopagebreak  Put
\begin{equation*}
D := \mathrm{Ric} -c \mathrm{Id} = \left(
\begin{array}{ccc}
 -\dfrac{n}{4} -c & O & O  \\
O & \Big( \dfrac{n}{4} -c\Big) I_{m-1} & O  \\
O & O & \Big(-\dfrac{m-2}{2} -c\Big) I_n
\end{array}%
\right) .
\end{equation*}
Since $(N\langle\cdot,\cdot\rangle )$ is $pH$-type, we have $[{\mathfrak
n}, {\mathfrak n}] = \lsp z_1, \dots, z_m\rsp$.
Suppose $[e, e'] = z_1$ and $[e'', e'''] = z_2$.
From $[e, e'] = z_1$, if $D \in \mathrm{Der}(\mathfrak{n})$ we obtain
$$c = -(m-2) + \dfrac{n}{4}. $$
From $[e'', e'''] = z_2$, if $D \in \mathrm{Der}(\mathfrak{n})$ we get
$$c = -(m-2)  -\dfrac{n}{4}. $$
Therefore $D \in \mathrm{Der}(\mathfrak{n})$ if and only if
$$c = -(m-2) ,  \, n = 0.$$
This happens if and only if $N$ is commutative.
\end{proof}

\begin{remark} 
If $\mathfrak{E}$ is positive-definite and the center is nondegenerate,
we calculate the Ricci operator and obtain
\begin{equation*}
\mathrm{Ric} = \left(
\begin{array}{ccc}
-\dfrac{n}{4} I_{m_q} & O & O  \\
O & \dfrac{n}{4} I_{m_p} & O  \\
O & O & -\dfrac{m-2}{2} I_n
\end{array}%
\right) ,
\end{equation*}
where $\varepsilon _1 = \cdots = \varepsilon _q = -1$ and $\varepsilon
_{q+1} = \cdots = \varepsilon _m = 1$ and $m=p+q$.
It turns out that this is not a nontrivial nilsoliton. However, we consider 
only the Lorentzian case.
\end{remark}




\subsection{Nilsolitons of $pH$-type with positive-definite center}
Let $(N, \langle\cdot,\cdot\rangle )$ be Lorentzian, of $pH$-type, and with 
positive-definite center.

\begin{prop}\label{prop_of_dim} 
$\dim {\mathfrak Z} +1 \le \dim {\mathfrak E}$. 
\end{prop}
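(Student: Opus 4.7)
The plan is to exhibit $\dim \mathfrak{Z} + 1$ linearly independent vectors inside $\mathfrak{E}$ by applying the Clifford-like operators $j(z_\alpha)$ to a single, well-chosen vector. First I would observe that the hypotheses---nondegenerate, positive-definite center---force $\mathfrak{U} = 0$ and hence also $\mathfrak{V} = 0$, since $\iota$ pairs these subspaces isomorphically. Thus the decomposition reduces to $\mathfrak{n} = \mathfrak{Z} \oplus \mathfrak{E}$, with $\iota$ acting as the identity on $\mathfrak{Z}$ and still as $\bar{E}_n$ on $\mathfrak{E}$.

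The central move is to switch from the possibly indefinite metric on $\mathfrak{E}$ to the auxiliary bilinear form $B(x, y) := \langle x, \iota y \rangle$. A direct computation in an orthonormal basis $\{e_a\}$ of $\mathfrak{E}$ gives $B(e_a, e_b) = \bar{\varepsilon}_a \bar{\varepsilon}_b\, \delta_{ab} = \delta_{ab}$, so $B$ is \emph{positive definite} on $\mathfrak{E}$, regardless of its original signature. Moreover Proposition \ref{prop_J} implies immediately that each $j(z)$ is $B$-skew-symmetric, and the self-adjointness of $\iota$ makes $B$ itself symmetric.

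Now I would pick any $e \in \mathfrak{E}$ with $B(e,e) = 1$ and an orthonormal basis $z_1, \ldots, z_m$ of $\mathfrak{Z}$ (so $\langle z_\alpha, z_\beta\rangle = \delta_{\alpha\beta}$ and $\iota z_\alpha = z_\alpha$). The middle identity of Proposition \ref{prop_pH} yields
\[
B(j(z_\alpha) e,\, j(z_\beta) e) \;=\; \langle z_\alpha, \iota z_\beta\rangle\, \langle e, \iota e\rangle \;=\; \delta_{\alpha\beta},
\]
while $B$-skew-symmetry combined with $B$-symmetry gives $B(e, j(z_\alpha)e) = 0$. Hence $\{e, j(z_1)e, \ldots, j(z_m)e\}$ is a $B$-orthonormal set in $\mathfrak{E}$, therefore linearly independent, and the inequality $\dim \mathfrak{E} \geq m + 1 = \dim \mathfrak{Z} + 1$ follows.

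I do not anticipate a serious obstacle. The one point worth flagging is that linear independence must be extracted via the positive-definite auxiliary form $B$, not the inner product on $\mathfrak{E}$ itself: pairwise $\langle \cdot,\cdot\rangle$-orthogonal vectors can fail to be independent because of null directions in the Lorentzian case, whereas pairwise $B$-orthogonality with nonzero $B$-norms forces independence automatically. Everything beyond this is straightforward substitution into the $pH$-type identities already collected in Propositions \ref{prop_J} and \ref{prop_pH}.
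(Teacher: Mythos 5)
Your proof is correct, and it reaches the conclusion by a genuinely cleaner route than the paper's, even though both rest on the same underlying computation: apply the operators $j(z_\alpha)$ to one fixed vector of $\mathfrak{E}$ and use Proposition \ref{prop_pH} to see that the images pair orthonormally under $\langle\cdot,\iota\,\cdot\rangle$. The paper fixes the timelike vector $e_1$, runs Gram--Schmidt to build an adapted orthonormal basis in which each $j(z_\alpha)e_1$ is literally a basis vector $e_{\alpha+1}$, proves a small counting lemma ($m(i)\le 1$, i.e.\ the assignment $\alpha\mapsto\alpha_1$ is injective), and then derives a contradiction ($j(z)e_1=0$ for some nonzero central $z$) under the assumption $m+1>n$. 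You instead observe once and for all that $B=\langle\cdot,\iota\,\cdot\rangle$ is symmetric and positive definite on $\mathfrak{E}$, that each $j(z)$ is $B$-skew by Proposition \ref{prop_J}, and that $\{e,\,j(z_1)e,\dots,j(z_m)e\}$ is $B$-orthonormal, hence linearly independent --- no adapted basis, no reindexing, no contradiction. Your flagged point, that independence must be read off from $B$ rather than from the indefinite metric on $\mathfrak{E}$, is exactly the right one; in fact pairwise $B$-orthogonality together with $B$-norm $1$ already forces independence for any symmetric bilinear form, so positive definiteness is needed only to guarantee that a vector $e$ with $B(e,e)=1$ exists. The one hypothesis you use implicitly is that $j(z)$ preserves $\mathfrak{E}$; this holds here because, as you note, the nondegenerate center forces $\mathfrak{U}=\mathfrak{V}=0$, so $\mathfrak{v}=\mathfrak{E}$. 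What your approach buys is brevity and the elimination of the paper's auxiliary lemma; what the paper's approach buys is the explicit adapted basis with $j(z_\alpha)e_1=e_{\alpha+1}$, which it reuses immediately afterward in the proof of Theorem \ref{maintheorem3} to compute the Ricci operator.
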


\begin{proof}
Let $ \{ z_1, \dots, z_m, e_1, \dots, e_n\}$ be an orthonormal basis with 
$\langle e_1, e_1\rangle =-1$.
Since $(N, \langle\cdot,\cdot\rangle )$ is of $pH$-type, we may put 
$\bar{F}_{i+1}:= j(z_i) e_1 = \sum_{i=2}^n k^i e_i$ for some constants 
$k^i$. Put 
$$\bar{e}_{i} : = \Big\langle F_i, F_i \Big\rangle ^{-\frac{1}{2}} F_i.$$
Using $\bar{e}_i$ as in Gram-Schmidt orthonormalization, we can construct a 
new orthonormal basis $ \lsp z_1, \dots, z_m, e_1, \tilde{e}_2\dots,
\tilde{e}_n\rsp$. 
We now write $e_i$ for $\tilde{e}_i$. By the preceding, for any $\alpha$ 
there exists $\alpha_1$ such that 
$$j (z_\alpha) e_1 = e_{\alpha_1},$$ 
$$j (z_\alpha) e_{\alpha_1} = -e_1,$$ 
and $j (z_\alpha) e_{\beta}$ does not have an $e_1$ component for $\beta \ne 
\alpha$. Let $m (i)$ be the number of $\bar{\imath}$ such that  
$$j (z_{\bar{\imath}}) e_1 = e_{i},$$ 
$$j (z_{\bar{\imath}}) e_{i} = -e_1,$$ 
for $i = 1, \dots, n$. 
\begin{lm}
$m(1)=0$ and $m(i)=0$ or $1$. 
\end{lm}
\begin{proof}
From the definition, it follows that $m(1)=0$. 
Let $m(2)=2$. 
Then we have $\alpha_2, \bar{\alpha}_2$ such that 
$$j (z_{\alpha_2}) e_1 = e_{2},$$ 
$$j (z_{\alpha_2}) e_{2} = -e_1, $$ 
and 
$$j (z_{\bar{\alpha}_2}) e_1 = e_{2},$$ 
$$j (z_{\bar{\alpha}_2}) e_{2} = -e_1.$$ 
Then we obtain 
$$\langle j (z_{\alpha_2}) e_{1} , \iota j (z_{\bar{\alpha}_2}) e_{1} \rangle = \langle e_2,  \iota e_2\rangle=1. $$
On the other hand, from Proposition \ref{prop_pH} we find 
$$\langle j (z_{\alpha_2}) e_{1} , \iota j (z_{\bar{\alpha}_2}) e_{1} \rangle = \langle z_{\alpha_2}, \iota z_{\bar{\alpha}_2}\rangle \langle e_1, \iota e_1\rangle = \langle z_{\alpha_2}, \iota z_{\bar{\alpha}_2}\rangle .$$
Therefore $z_{\bar{\alpha}_2} = z_{\alpha_2}$. 
\end{proof}
So, we change the order of our new orthonormal basis $ \{ z_1, \dots, 
z_m, e_1, \dots, e_n\}$ so that $\langle e_1, e_1\rangle =-1$ and 
$$j (z_\alpha) e_1 = e_{\alpha +1} ,$$ 
$$j (z_\alpha) e_{\alpha +1} = -e_1 .$$ 
If $m+1 = \dim {\mathfrak Z} +1> \dim {\mathfrak E} =n$, 
we have $j(z_n) e_1 = \sum_{a=2}^n h^a e_a$ for some constants $h^a$. 
Then for any $i=1, \dots, n-1$ we obtain 
$$\langle j (z_n) e_{1} , \iota j (z_i) e_{1} \rangle = \langle \sum_{a=2}^n
h^a e_a,  \iota e_{a+1}\rangle=h^{i+1}.$$
On the other hand, from Proposition \ref{prop_pH} we get 
$$\langle j (z_n) e_{1} , \iota j (z_i) e_{1} \rangle = \langle z_n, \iota z_i \rangle \langle e_1, \iota e_1\rangle = 0.$$
Thus $j(z_n) e_1=0$. This is a contradiction. 
\end{proof}

\begin{theorem}\label{maintheorem3} 
$(N, \langle\cdot,\cdot\rangle )$ is a nilsoliton if and only if $m=1$ and $n=2$. 
\end{theorem}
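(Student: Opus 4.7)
The plan is to set up the adapted orthonormal basis of Proposition \ref{prop_of_dim} and translate the nilsoliton equation into a small system of linear constraints that will be over-determined unless $m=1$ and $n=2$. Since the center is positive definite, $\iota$ is the identity on $\mathfrak{Z}$ and flips sign only on $\mathbb{R}e_1 \subset \mathfrak{E}$. From $j(z) = \iota\,\mathrm{ad}'_\bullet \iota z$ one gets $J_{z_\alpha} = \iota\,j(z_\alpha)$, so $J_{z_\alpha} e_1 = e_{\alpha+1}$ and $J_{z_\alpha} e_{\alpha+1} = e_1$; combined with $\langle J_{z_\gamma} e_1, e_{\alpha+1}\rangle = \langle z_\gamma, [e_1, e_{\alpha+1}]\rangle$, this yields the brackets $[e_1, e_{\alpha+1}] = z_\alpha$ for $\alpha = 1, \dots, m$.

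First I would compute the Ricci operator. Plugging into Theorem \ref{ricci2} and using Proposition \ref{prop_pH}, together with the skew-identity $\langle j(z_\beta) e_{\alpha+1}, e_1\rangle = \delta_{\alpha\beta}$ that follows from Proposition \ref{prop_J}, one finds that $\mathrm{Ric}$ is diagonal in the above basis with eigenvalues $\tfrac{n-4}{4}$ on $\mathfrak{Z}$, $\tfrac{m}{2}$ on $e_1$, $-\tfrac{m-2}{2}$ on each $e_{\alpha+1}$, and $-\tfrac{m}{2}$ on the remaining $e_a$ with $a > m+1$. Setting $D := \mathrm{Ric} - c\,\mathrm{Id}$, the derivation identity applied to $[e_1, e_{\alpha+1}] = z_\alpha$ immediately yields $c = \tfrac{8-n}{4}$.

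Next I would rule out every pair other than $(m,n) = (1,2)$ by exhibiting a second nontrivial bracket whose derivation identity is incompatible with $c = \tfrac{8-n}{4}$. If $n > m+1$: since $j(z_\alpha)^2 = -\mathrm{Id}$ is invertible on $\mathfrak{E}$, the vector $J_{z_\alpha} e_a = \iota\,j(z_\alpha) e_a$ is nonzero for every $a > m+1$, forcing some bracket $[e_{\alpha+1}, e_a]$ or $[e_a, e_b]$ (with $a, b > m+1$) to be a nonzero multiple of some $z_\delta$; the derivation identity there combines with $c = \tfrac{8-n}{4}$ to force $m = 0$ or $m = -1$, contradicting $m \ge 1$. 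If $n = m+1$ with $m \ge 2$: assuming all cross brackets $[e_{\alpha+1}, e_{\beta+1}]$ vanish would make $J_{z_\gamma} e_{\alpha+1} = \delta_{\alpha\gamma} e_1$ (the $e_1$-component being fixed by $[e_1, e_{\alpha+1}] = z_\alpha$), and hence $j(z_\gamma)^2 e_{\alpha+1} = -\delta_{\alpha\gamma} e_{\gamma+1}$, contradicting $j(z_\gamma)^2 = -\mathrm{Id}$ whenever $\gamma \ne \alpha$; so some $[e_{\alpha+1}, e_{\beta+1}]$ is nonzero, and its derivation identity forces $m = 1$, again a contradiction.

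This leaves only $m = 1$, $n = 2$, which is $H_3$ with a Lorentzian metric; the introductory remark that every left-invariant pseudo-Riemannian metric on $H_3$ is an algebraic Ricci soliton then supplies the converse direction. The main obstacle I anticipate is the Ricci computation in Step 2: the three subspaces of $\mathfrak{E}$ (the timelike line $\mathbb{R}e_1$, the ``paired'' span of the $e_{\alpha+1}$'s, and the complement spanned by the extra $e_a$'s) each interact with $\iota$ differently, so keeping the signs in Proposition \ref{prop_pH} straight is essential for extracting the four eigenvalue expressions correctly; once they are in hand, the case elimination reduces to solving four short linear equations.
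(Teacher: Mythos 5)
Your proof is correct and follows the same overall strategy as the paper's: compute the diagonal Ricci operator in the adapted basis of Proposition \ref{prop_of_dim}, set $D=\mathrm{Ric}-c\,\mathrm{Id}$, and apply the derivation identity to brackets $[e_i,e_j]\in\mathfrak{Z}$ until the linear system for $c$ is overdetermined. But you execute two steps more carefully than the paper does, and both matter. First, the paper writes $[e_i,e_j]=z_{ij}$ ``for any $i,j$'' and declares the resulting equations incompatible for $n\neq 2$; the derivation identity is vacuous on a vanishing bracket, so one must actually produce a second \emph{nonzero} bracket whose constraint conflicts with $c=\tfrac{8-n}{4}$ coming from $[e_1,e_{\alpha+1}]=z_\alpha$. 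Your use of the invertibility of $j(z_\alpha)^2=-\mathrm{Id}$ to force such a bracket among $\{e_a\}_{a\ge 2}$ when $n>m+1$, and your separate argument when $n=m+1$ with $m\ge 2$ (where assuming all cross brackets vanish contradicts $j(z_\gamma)^2=-\mathrm{Id}$ for $\gamma\neq\alpha$), supply exactly the missing step; together with $n\ge m+1$ from Proposition \ref{prop_of_dim} the case split is exhaustive. Second, since $\langle e_1,e_1\rangle=-1$, the Ricci \emph{operator} has eigenvalue $+\tfrac{m}{2}$ on $e_1$ even though $\Rc(e_1,e_1)=-\tfrac{m}{2}$; the paper's displayed operator carries $-\tfrac{m}{2}$ in that slot, which is why it reports $c=\tfrac12$ for $H_3$ while you get $c=\tfrac32$. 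Your value is the consistent one: with $\mathrm{Ric}=\mathrm{diag}(-\tfrac12,\tfrac12,\tfrac12)$ and $c=\tfrac32$ one gets $D=\mathrm{diag}(-2,-1,-1)$, which is a derivation. In short, same route as the paper, but your version is the one that actually closes the ``only if'' direction.
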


\begin{proof} 
We use the orthonormal basis in Proposition \ref{prop_of_dim}. 
From Theorem \ref{ricci2}, we obtain the nonzero components of the Ricci 
tensor as
\begin{eqnarray*}
\Rc (z_i, z_i) &  = & \dfrac{1}{4} \sum_a^n \bar{\varepsilon}_a \langle j (\iota z_i) e_a, j (\iota z_i) e_a\rangle ,  \\
   &  = & \dfrac{1}{4} \Big\{ \bar{\varepsilon}_1 \langle j (z_i) e_1, j (z_i) e_1\rangle  
+ \bar{\varepsilon}_{i_1} \langle j (z_i) e_{i_1}, j (z_i) e_{i_1}\rangle \\  
& & + \sum_{a\ne 1, i_1} \bar{\varepsilon}_a \langle j (z_i) e_a, j (z_i) e_a\rangle \Big\} \\
   &  = & \dfrac{1}{4} (n-4), \\
\Rc (e_i, e_i) &  = & -\dfrac{1}{2} \sum_{\alpha \ne 1, i_1} \langle j (z_{\alpha }) e_1, j (z_{\alpha }) e_1\rangle = -\dfrac{m-2 m (i)}{2}, \\
\end{eqnarray*}
for $i= 1, \dots, n$. 
The Ricci operator is then given by
$$
\mathrm{Ric} = \mathrm{diag} \left( \dfrac{1}{4} (n-4), -\dfrac{m -2 m (1)}{2}, \dots, -\dfrac{m -2 m (n)}{2}\right) .
$$
Put
\begin{equation*}
D := \mathrm{Ric} -c \cdot \mathrm{Id}. 
\end{equation*} 
For any $i,j$, 
we can put $[e_i, e_j] = z_{i j}$. 
If $D \in \mathrm{Der}(\mathfrak{n})$ we find 
$$\dfrac{1}{4} (n-4)-c = 2 m - 2 m(i) -2 m(j) -2 c. $$
If $n\ne 2$, this is incompatible. 
If $n=2$, then $m=1$ and $(N, \langle\cdot,\cdot\rangle )$ is the three-dimensional Heisenberg group. 
We obtain $c = \dfrac{1}{2}$. 
This is a nilsoliton (see also \cite{O11}). 
\end{proof} 

\subsection{Nilsolitons of $pH$-type with degenerate center}
For Lorentzian nilsolitons of $pH$-type with degenerate center, we have a
basis $\{ u, z_1, \dots, z_m, v, e_1, \dots, e_n\}$ with $\varepsilon_i=1$
for any $i=1, \dots, n$.
Since this is of $pH$-type, it is easy to check that $n$ is odd. 
Put $n=2 k+1$. 
Now we prove
\begin{theorem}
This is a nilsoliton if and only if $n=1$. 
Moreover, we obtain the flat metric on the $3$-dimensional Heisenberg group. 
\end{theorem}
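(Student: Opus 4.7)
The approach parallels that of Theorem \ref{maintheorem3}: compute the Ricci operator, set $D = \mathrm{Ric} - c\cdot\mathrm{Id}$, and derive constraints from requiring $D$ to be a derivation. First I would compute $\mathrm{Ric}$ in the basis $\{u, z_1, \ldots, z_m, v, e_1, \ldots, e_n\}$, adapted in the spirit of Proposition \ref{prop_of_dim}. By Theorem \ref{ricci2}, $\Rc(u, \bullet) = 0$, so $u$ lies in the kernel of $\mathrm{Ric}$; using Proposition \ref{prop_pH} with $\iota z_\alpha = z_\alpha$ and $\iota e_a = e_a$, the remaining components can be read off in closed form, with eigenvalue $\tfrac{n}{4}$ on $\mathfrak{Z}$ (as in Theorem \ref{maintheorem1}) and a block on $\mathfrak{V} \oplus \mathfrak{E}$ governed by the $\mathfrak{V}$-components of $j(z_\alpha) e_a$ and by $j(u) e_a$.

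Next I would exploit $Du = -cu$, which follows from $\mathrm{Ric}(u) = 0$. Since the degenerate piece $\mathfrak{U}$ lies inside $[\mathfrak{n}, \mathfrak{n}]$, there exist $x, y \in \mathfrak{V} \oplus \mathfrak{E}$ with $[x, y] = \alpha u + \sum_\beta \gamma^\beta z_\beta$ and $\alpha \neq 0$. Projecting the derivation identity $D[x,y] = [Dx, y] + [x, Dy]$ onto $u$, and separately onto each $z_\beta$, gives linear relations among $c$ and the eigenvalues of $D$ on each subspace. Taking $(x, y)$ of the form $(v, e_a)$ and $(e_i, e_j)$ and comparing the resulting equations yields a linear system whose only solution compatible with the parity $n = 2k+1$ forced by the $pH$-type identity is $n = 1$.

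The principal obstacle is handling the possible off-diagonal entries of $\mathrm{Ric}$ between $v$ and $\mathfrak{E}$ (for instance $\mathrm{Ric}(v)$ can have a $u$-component, so that $\mathrm{Ric}$ acts nilpotently rather than semisimply on the $\mathfrak{V} \oplus \mathfrak{E}$ block) and, correspondingly, ensuring that the derivation constraints genuinely force $n = 1$. I would address this by choosing the orthonormal basis as in Proposition \ref{prop_of_dim} so that each $j(z_\alpha) e_a$ is in canonical form, decoupling the block structure of $\mathrm{Ric}$, and by invoking the derivation identity on enough independent brackets that the residual off-diagonal contributions cancel.

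Finally, when $n = 1$, the space $\mathfrak{v}$ is two-dimensional and cannot support two anticommuting complex structures (since $\{j(u), j(z_\alpha)\} = -2\langle u, \iota z_\alpha\rangle \mathrm{Id} = 0$ for any $z_\alpha$ independent of $u$); hence $m = 0$ and $\mathfrak{n}$ is the three-dimensional Heisenberg algebra with $[v, e_1] = u$ up to scaling. The Lorentzian metric with null one-dimensional center on this algebra is well known to be flat (see, e.g., \cite{O11}), so $\mathrm{Ric} = 0$ and the nilsoliton equation is satisfied trivially with $c = 0$, $D = 0$.
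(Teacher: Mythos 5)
Your proposal follows essentially the same route as the paper's proof: compute the Ricci operator in a basis adapted as in Proposition \ref{prop_of_dim} (correctly anticipating the off-diagonal $u$-component of $\mathrm{Ric}(v)$ produced by the null pairing), set $D=\mathrm{Ric}-c\cdot\mathrm{Id}$, and extract incompatible values of $c$ from the derivation identity applied to brackets of the form $[v,e_a]$ --- exactly the brackets $[v,e_1]=u$ and $[v,e_2]=z_1$ that the paper uses. Your handling of the $n=1$ endgame (forcing $m=0$ from the anticommuting maps $j(u)$, $j(z_\alpha)$ on a two-dimensional space and quoting flatness of $H_3$ with null center) is in fact somewhat more explicit than the paper's one-line assertion.
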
 

\begin{proof}
As in the proof of Proposition \ref{prop_of_dim}, 
we construct a new basis 
$$\{ u, z_1, \dots, z_m, v, e_1, \dots, e_n\}$$ 
with
$$j (u) v = e_1,$$ 
$$j (u) e_1 = -v,$$ 
$$j (z_i) v = e_{i+1} ,$$ 
$$j (z_i) e_{i+1} = -v.$$ 
From Theorem \ref{ricci2}, the Ricci tensor is given by
\begin{eqnarray*} 
\Rc (u, \bullet ) &  = & 0 \\
\Rc (z_{\alpha}, z_{\beta} ) &  = & \dfrac{1}{4} \sum_a^n \bar{\varepsilon}_a \langle j(\iota z_{\alpha}) e_a, j(\iota z_{\beta}) e_a\rangle ,  \\
&  = & \dfrac{1}{4} \sum_a^n \langle j(\iota z_{\alpha}) e_a, \iota j(\iota z_{\beta}) e_a\rangle ,  \\
&  = & \dfrac{1}{4} \sum_a^n \langle z_{\alpha}, \iota z_{\beta} \rangle \langle e_a, \iota e_a\rangle,  \\
&  = & \dfrac{n}{4} \langle z_{\alpha}, \iota z_{\beta} \rangle \\
\Rc (v, z_{\alpha}) & = & \dfrac{1}{4} \sum_a^n \bar{\varepsilon}_a \langle j(\iota v) e_a, j(\iota z_{\alpha}) e_a\rangle ,  \\
&  = & \dfrac{1}{4} \sum_a^n \langle j(u) e_a, j(\iota z_{\alpha}) e_a\rangle ,  \\
&  = & \dfrac{1}{4} \Big\{ \langle j(u) e_1, j(\iota z_{\alpha}) e_1\rangle + \sum_{a=2}^n \langle j(u) e_a, j(\iota z_{\alpha}) e_a\rangle \Big\},  \\
&  = & 0 \\ 
\Rc (v, v) &  = & -\dfrac{1}{2} \sum_{\alpha}^m \varepsilon _{\alpha}
   \langle j (z_{\alpha} ) v, j (z_{\alpha}) v\rangle + \dfrac{1}{4}
   \sum_a^n \bar{\varepsilon}_a \langle j (\iota v) e_a, j (\iota v) e_a\rangle,  \\
 &  = & -\dfrac{1}{2} \sum_{\alpha}^m \langle e_{\alpha}, e_{\alpha} \rangle 
+ \dfrac{1}{4} \sum_{a=2}^n \langle v, \iota v \rangle \langle e_a, \iota e_a\rangle,  \\   
&  = & \dfrac{n-2 m-1}{4}  \, \, (\mathrm{if}\ n\ge 2)\\
\Rc (v, v) & = & 0 \, \, (k=1), \\
\Rc (v, e_i) & = & 0, \\
\Rc (e_1, e_b) &  = & -\dfrac{1}{2} \sum_{\alpha}^m \varepsilon _{\alpha} \langle j (z_{\alpha}) e_1, j (z_{\alpha}) e_b\rangle , \\
&  = & -\dfrac{m}{2} \delta _{1 b} \\
\Rc (e_a, e_b) &  = & -\dfrac{m-1}{2} \delta _{a b} \, \, (a, b\ge 2)\\
\end{eqnarray*}
If $n=1$, then we obtain the flat metric. 
Suppose $n\ge 2$. 
Then we get the Ricci operator as 
\begin{equation*}
\mathrm{Ric} = \left(
\begin{array}{ccccc}
0 & 0 & -\dfrac{n-2 m-1}{4} & 0 & O  \\
0 & \dfrac{n}{4} I_m & O & O & O  \\
O & O & O & O & O  \\
O & O & O & -\dfrac{m}{2} & O  \\
O & O & O & O & -\dfrac{m-1}{2} I_n  \\
\end{array}%
\right) . 
\end{equation*}
Put
\begin{equation*}
D := \mathrm{Ric} -c \cdot \mathrm{Id}. 
\end{equation*} 
From $[v, e_1] = u$, 
if $D \in \mathrm{Der}(\mathfrak{n})$ we find
 $$c = -\dfrac{m}{2}.$$
From $[v, e_2,] = z_1$, 
if $D \in \mathrm{Der}(\mathfrak{n})$ we find
 $$c = -\dfrac{m-1}{2}.$$
This is incompatible. 
\end{proof}

\subsection{Nilsolitons of $pH$-type with degenerate center in the 
pseudo-Riemannian case}

Next, we consider a degenerate center with a pseudo-Riemannian metric.
It seems we cannot calculate the Ricci tensor of $pH$-type with degenerate
center in any general or systematic way suitable for the present use.  We
can see examples that are nilsolitons and others that are not.

\begin{ex}
Let $(H_3, g_3)$ be the 3-dimensional Heisenberg group with degenerate
center. This is of $pH$-type (see \cite{CP99}) and flat (see
\cite{N79} and \cite{CP99}).
\end{ex}

\begin{ex}
We consider Example 2.6 (and 3.30) in \cite{CP99}, a $7$-dimensional
Heisenberg group with the simplest quaternionic Heisenberg algebra.  We
take a basis $\{ u_1,u_2, z, v_1, v_2, e_1, e_2\}$ with structure
equations
\begin{eqnarray*}
&&%
\begin{array}{c}
 [e_1, e_2] = z \hspace{2cm}  [v_1, v_2] = z
\end{array}
\notag \\
&&%
\begin{array}{c}
 [e_1, v_1] = u_1 \hspace{1.82cm}  [e_2, v_1] = u_2
\end{array}
\notag \\
&&%
\begin{array}{c}
 [e_1, v_2] = u_2 \hspace{1.82cm}  [e_2, v_2] = -u_1
\end{array}
\notag \\
\end{eqnarray*}
and nontrivial inner products
$$\langle u_i, v_j\rangle = \delta _{i j}, \, \langle z, z\rangle = \varepsilon, \, \langle e_a, e_a\rangle = \bar{\varepsilon} _a.$$

\begin{theorem}
This is not a nilsoliton.
\end{theorem}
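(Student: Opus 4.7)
The strategy is to compute the Ricci operator explicitly in the given basis and then derive a contradiction from the derivation condition $D = \mathrm{Ric} - c \cdot \mathrm{Id}$ by exploiting two distinct structure equations whose right-hand sides both equal $z$.

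First I would set up the involution $\iota u_i = v_i$, $\iota v_i = u_i$, $\iota z = \varepsilon z$, $\iota e_a = \bar{\varepsilon}_a e_a$, and then compute $j(a)$ for $a \in \{z, u_1, u_2\}$ on each basis element using $\langle J_y x, w \rangle = \langle y, [x, w] \rangle$ and $j(y) = \iota J_{\iota y}$. Since $[\mathfrak{n}, \mathfrak{n}] \subset \mathfrak{z} = \mathrm{span}(u_1, u_2, z)$ and $u_i$ is orthogonal to all of $\mathfrak{z}$, we have $J_{u_i} \equiv 0$. The remaining nontrivial maps $j(z), j(u_1), j(u_2)$ permute the basis elements of $\mathfrak{V} \oplus \mathfrak{E}$ up to signs in a concrete way; for example $j(z) e_1 = e_2$, $j(z) e_2 = -e_1$, $j(z) v_1 = v_2$, $j(z) v_2 = -v_1$, with analogous formulas for $j(u_i)$ swapping $v$'s with $e$'s.

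Next I feed these into Theorem \ref{ricci2}. Many terms drop out thanks to the isotropy of $\mathfrak{U}$ and $\mathfrak{V}$ and the fact that $j(u_i)$ maps $\mathfrak{E}$ into the null subspace $\mathfrak{V}$; I expect only the $(z,z)$ and diagonal $(e_a, e_a)$ components of $\Rc$ to survive. After raising an index with the (diagonal) metric on $\mathfrak{Z} \oplus \mathfrak{E}$, this should yield a diagonal Ricci operator of the form
\[
\mathrm{Ric} = \mathrm{diag}\bigl(0, 0, \lambda, 0, 0, -\lambda, -\lambda\bigr), \qquad \lambda := \tfrac{\varepsilon \bar{\varepsilon}_1 \bar{\varepsilon}_2}{2},
\]
in the basis $\{u_1, u_2, z, v_1, v_2, e_1, e_2\}$, with $\lambda \in \{\pm 1/2\}$ nonzero in all sign choices.

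Finally, testing $D[X,Y] = [DX, Y] + [X, DY]$ on $[e_1, e_2] = z$ gives $\lambda - c = -2(\lambda + c)$, hence $c = -3\lambda$, while $[v_1, v_2] = z$ gives $\lambda - c = -2c$, hence $c = -\lambda$. These two constraints are incompatible because $\lambda \neq 0$, so no $c$ makes $D$ a derivation and the space cannot be a nilsoliton. The main obstacle is the careful bookkeeping needed to compute $j(a)$ and $\Rc$ correctly on a basis containing the degenerate null pairs $\{u_i, v_i\}$; once the diagonal form of $\mathrm{Ric}$ above is established, the contradiction between the two brackets $[e_1, e_2] = z = [v_1, v_2]$ is essentially immediate.
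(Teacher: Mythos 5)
Your proposal is correct and follows essentially the same route as the paper: compute the diagonal Ricci operator $\mathrm{diag}(0,0,\lambda,0,0,-\lambda,-\lambda)$ with $\lambda=\tfrac{1}{2}\varepsilon\bar{\varepsilon}_1\bar{\varepsilon}_2$, then show the derivation condition forces two incompatible values of $c$. The only (immaterial) difference is that you test the second constraint on $[v_1,v_2]=z$ while the paper uses $[e_1,v_1]=u_1$; both give $c=-\lambda$ against $c=-3\lambda$, and $\lambda\neq 0$ finishes the argument.
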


\begin{proof}
The Ricci operator is given by
\begin{equation*}
\mathrm{Ric} = \dfrac{1}{2} \varepsilon \bar{\varepsilon}_1 \bar{\varepsilon}_2 \left(
\begin{array}{cccc}
O & & & \\
 & 1  & & \\
 & & O & \\
& & & -I_2
\end{array}%
\right) .
\end{equation*}
Put
\begin{equation*}
D := \mathrm{Ric} -c \cdot \mathrm{Id} = \left(
\begin{array}{cccc}
-c I_2 & & & \\
 & \dfrac{1}{2} \varepsilon \bar{\varepsilon}_1 \bar{\varepsilon}_2  -c  & & \\
 & & -c I_2 & \\
& & & -\Big( \dfrac{1}{2} \varepsilon \bar{\varepsilon}_1 \bar{\varepsilon}_2  + c\Big) I_2
\end{array}%
\right) .
\end{equation*}
From $[e_1, e_2] = z$, if $D \in \mathrm{Der}(\mathfrak{n})$ we obtain
$$c = -\dfrac{3}{2} \varepsilon \bar{\varepsilon}_1 \bar{\varepsilon}_2. $$
From $[e_1, v_1] = u_1$, if $D \in \mathrm{Der}(\mathfrak{n})$ we obtain 
$$c = -\dfrac{1}{2} \varepsilon \bar{\varepsilon}_1 \bar{\varepsilon}_2. $$

Therefore $D \in \mathrm{Der}(\mathfrak{n})$ if and only if
$$c = \varepsilon \bar{\varepsilon}_1 \bar{\varepsilon}_2 = 0.$$
But this is impossible.
\end{proof}

\end{ex}

Kensuke Onda

Graduate School of Mathematics, Nagoya University, Furocho, Chikusaku, 
\newline
\hspace{1.2em} Nagoya 464-8602 JAPAN 

Tel: +81-052-789-2429 / FAX: +81-052-789-2829

\textit{E-mail address}: kensuke.onda@math.nagoya-u.ac.jp

\bigskip

Phillip E. Parker

Mathematics Department, Wichita State University, 
\newline 
\hspace{1.2em} Wichita KS 67260-0033 USA

\textit{E-mail address}: phil@math.wichita.edu

\end{document}